\newcommand{\leqnomode}{\tagsleft@true\let\veqno\@@leqno}
\newcommand{\reqnomode}{\tagsleft@false\let\veqno\@@eqno}
\newcommand{\eps}{\varepsilon}
\newcommand{\sem}{\setminus\{0\}}
\newcommand{\RR}{\mathbb{R}}
\newcommand{\dd}{\mathrm{d}}
\newcommand{\norm}[1]{\left\lVert#1\right\rVert}
\newcommand{\abs}[1]{\left\lvert#1\right\rvert}
\newcommand{\del}{\partial}
\newcommand{\supp}{\mathrm{supp}}
\newcommand{\td}[1]{\tilde{#1}}
\newcommand{\bdry}{(h\td{\del}_\nu+(\td{\del}_\nu\phi))}
\NewDocumentCommand{\op}{O{h}m}{\mathrm{Op}_{#1}\left(#2\right)}
\NewDocumentCommand{\tbdry}{O{}}{\tau_{#1}\bdry}
\newtheorem{theorem}{Theorem}[section]
\newtheorem{lemma}[theorem]{Lemma}
\newtheorem{proposition}[theorem]{Proposition}
\theoremstyle{definition}
\newtheorem{definition}[theorem]{Definition}
\theoremstyle{remark}
\numberwithin{equation}{section}
\let\oldappendix\appendix
\renewcommand{\appendix}{%
\oldappendix%
\numberwithin{theorem}{subsection}%
\numberwithin{equation}{subsection}%
\renewcommand{\thesubsection}{\Alph{subsection}}%
\renewcommand{\theequation}{\thesubsection.\arabic{equation}}%
\renewcommand{\thelemma}{\thesubsection.\arabic{lemma}}%
\renewcommand{\theproposition}{\thesubsection.\arabic{proposition}}%
\renewcommand{\thedefinition}{\thesubsection.\arabic{definition}}%
}
\begin{document}

            \title[Exponential Instability of an Inverse Problem for the Wave Equation]{On Exponential Instability of an Inverse Problem for the Wave Equation}

    \author{Leonard Busch}
    \address{Korteweg-de Vries Institute for Mathematics, University of Amsterdam, Amsterdam, The Netherlands}
    \curraddr{Korteweg-de Vries Institute for Mathematics, 1098 XG Amsterdam, The Netherlands}
    \email{l.a.busch@uva.nl}
    %\thanks{This research was performed while the first author was visiting the University of Helsinki.}
    
    \author{Matti Lassas}
    \address{Department of Mathematics and Statistics, University of Helsinki, Helsinki, Finland}
    \email{matti.lassas@helsinki.fi}
    
    \author{Lauri Oksanen}
    \address{Department of Mathematics and Statistics, University of Helsinki, Helsinki, Finland}
    \email{lauri.oksanen@helsinki.fi}
    
    \author{Mikko Salo}
    \address{Department of Mathematics and Statistics, University of Jyväskylä, Finland}
    \email{mikko.j.salo@jyu.fi}

\subjclass[2020]{Primary 35L05, 35R30, 35B35}

\date{June 30, 2025}

\keywords{wave equation, inverse problem, instability}

\begin{abstract}
        For a time-independent potential $q\in L^\infty$, consider the source-to-solution operator that maps a source $f$ to the solution $u=u(t,x)$ of $(\Box+q)u=f$ in Euclidean space with an obstacle, where we impose on $u$ vanishing Cauchy data at $t=0$ and vanishing Dirichlet data at the boundary of the obstacle. 
        We study the inverse problem of recovering the potential $q$ from this source-to-solution map restricted to some measurement domain. %
        By giving an example where measurements take place in some subset and the support of $q$ lies in the `shadow region' of the obstacle, we show that recovery of $q$ is exponentially unstable.
\end{abstract}

\maketitle

\section{Introduction}

Let $T>0$ be arbitrary and consider the strictly convex, compact obstacle $\mathcal{O} \coloneqq \overline{B_1(0)} \subset \RR^n$ with analytic boundary. Here and throughout, for any $r_0>0$ and $x_0\in \RR^n$, $B_{r_0}(x_0)$ denotes the open ball of radius $r_0$ about $x_0$ in $\RR^n$. Define $U \coloneqq B_{T+4}(0)\setminus \mathcal{O}$ (this choice being motivated later on) and introduce $X \coloneqq (-T,T)\times U$ and $X_+ \coloneqq (0,T)\times U$, where the definitions made so far remain fixed throughout this document. The wave operator in $X$ is denoted by $\Box = \partial_t^2 - \Delta_x$.

Let $q\in L^\infty(U)$. We consider the forward problem of finding a solution $u = u(t,x)$ to 
\begin{equation}\label{eq:forward}
	\Box u + q u = f\quad\text{in}\ \ X\,,\qquad u\vert_{t<0}=0\,,\quad u\vert_{(-T,T)\times\del U}=0%
\end{equation}
for any $f\in L^2(X_+)$. In fact, as shown in the appendix, \cref{lem:ex}, there is a continuous linear operator 
\[
    S_q \colon L^2(X_+) \to C([-T,T];H^1_0(U))\cap C^1([-T,T];L^2(U))
\]
so that $S_q(f)$ is the unique solution to \cref{eq:forward}. 

The aim of this note is to show that the inverse problem of determining $q$ from $S_q$ is exponentially unstable in certain partial data settings. This is achieved by appealing to the machinery in \cite{zbMATH07465842} and imposing support conditions on potentials $q$ that induce Gevrey smoothing of $S_q-S_0$. Throughout this document $e_1 = (1,0,\dots,0)\in \RR^n$ refers to the first standard basis vector, $r \in (0,1)$ is a fixed constant, and for sets $A,B$ we use the notation $A \Subset B$ to mean that the closure of $A$ is a compact subset of $B$. %
\begin{theorem}\label{thm:main}
	Let $r\in (0,1)$ be fixed, $\Sigma \Subset B_{r}(2e_1), \Xi \Subset B_{r}(-2e_1)$ be open, $T' \in (0,T)$, and define $\Omega \coloneqq (0,T')\times \Xi$. Let $\mu\in\RR, \delta>0$ be fixed so that $\mu+\delta > n/2$ and define $K \coloneqq \left\{q \in H^{\mu+\delta}(U)\colon \supp \,q\subset \bar \Sigma\,, \norm{q}_{H^{\mu+\delta}(U)}\leq 1\right\}$.

	If $\omega$ is a modulus of continuity so that 
	\begin{equation*}%
		\norm{q_1-q_2}_{H^\mu(U)} \leq \omega\left(\norm{\mathbb{1}_{\Omega}(S_{q_1}-S_{q_2})\circ \mathbb{1}_{\Omega}}_{L^2(\Omega)\to L^2(\Omega)}\right)\,,\qquad q_1,q_2\in K\,,
	\end{equation*}
	then $\omega(s) \gtrsim \abs{\log s}^{-\delta\frac{6n+7}{n}}$ for $s$ small.
\end{theorem}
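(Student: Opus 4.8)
The plan is to combine a uniform \emph{Gevrey-$3$ smoothing estimate} for the measurement map $q\mapsto F(q)\coloneqq\mathbb{1}_\Omega(S_q-S_0)\circ\mathbb{1}_\Omega$, $q\in K$, with the \emph{abstract entropy comparison} underlying the instability machinery of \cite{zbMATH07465842}. The geometric input is elementary: since $r<1$, the midpoint of any segment joining a point of $B_r(2e_1)$ to one of $B_r(-2e_1)$ lies in $B_r(0)\subset\mathcal O$, so $\bar\Sigma$ and $\Xi$ lie mutually in the \emph{open geometric shadow} of the strictly convex analytic obstacle $\mathcal O=\overline{B_1(0)}$; moreover $U=B_{T+4}(0)\setminus\mathcal O$ is large enough that within time $T'$ no wave issued from $\Xi\cup\bar\Sigma$ reaches $\partial B_{T+4}(0)$, so on this time scale the solutions agree with those of the exterior problem for $\mathcal O$, to which the diffraction results apply.

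\emph{Step 1 (smoothing).} From $\Box S_0=(\Box+q)S_q=\id$ one has $S_q-S_0=-S_q\,q\,S_0$, so $F(q)=-\mathbb{1}_\Omega S_q\,q\,S_0\mathbb{1}_\Omega$ and, writing $\mathcal K_A$ for the Schwartz kernel of an operator $A$, the kernel of $F(q)$ is
\begin{equation*}
	k_q\bigl((t,x),(t',x')\bigr)=-\int_{(0,T')\times\bar\Sigma}\mathcal K_{S_q}\bigl((t,x),(s,y)\bigr)\,q(y)\,\mathcal K_{S_0}\bigl((s,y),(t',x')\bigr)\,\dd s\,\dd y\,,
\end{equation*}
the $(s,y)$-integration running over the fixed compact set $(0,T')\times\bar\Sigma$ because $\supp q\subset\bar\Sigma$. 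The two propagator kernels connect the mutually shadowed regions $\Omega$ and $(0,T')\times\bar\Sigma$, so I would argue as follows. For $\mathcal K_{S_0}\bigl((s,y),(t',x')\bigr)$ with $(s,y)\in(0,T')\times\bar\Sigma$ and $(t',x')\in\bar\Omega$, the free-obstacle evolution reaches $\bar\Omega$ only through diffraction around $\mathcal O$, so by Lebeau's Gevrey-$3$ diffraction theorem for strictly convex analytic obstacles — in the effective, $q$-uniform form provided by \cite{zbMATH07465842} — it is Gevrey-$3$ in $(t',x')$ with seminorms controlled by the geometry alone. For the other factor, given an $L^2$ source $h$ supported in $(0,T')\times\bar\Sigma$ the solution $v\coloneqq S_q h$ satisfies $\Box v=h-qv$, whose right-hand side is again an $L^2$ source supported in $(0,T')\times\bar\Sigma$ of norm $\lesssim\norm{q}_{L^\infty(U)}\norm{h}_{L^2}$ by the energy estimate for $\Box+q$; hence $v=S_0(h-qv)$ is a free-obstacle evolution of such a source, so $\mathbb{1}_\Omega v\in\mathcal G^3(\bar\Omega)$ with $\mathcal G^3$-norm $\lesssim\norm{q}_{L^\infty(U)}\norm{h}_{L^2}$ (again by the shadow property), which by duality bounds $\sup_{(t,x)\in\bar\Omega}\norm{\partial^\gamma_{(t,x)}\mathcal K_{S_q}\bigl((t,x),\cdot\bigr)}_{L^2((0,T')\times\bar\Sigma)}$ by a constant times $\norm{q}_{L^\infty(U)}\,A^{|\gamma|}(\gamma!)^3$. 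Differentiating $k_q$ in $(t,x)$ and $(t',x')$ under the integral — the $(t,x)$-derivatives landing on $\mathcal K_{S_q}$, the $(t',x')$-derivatives on $\mathcal K_{S_0}$, while $q$ contributes only $\norm{q}_{L^\infty(U)}$ — and using Cauchy--Schwarz in $(s,y)$ over $(0,T')\times\bar\Sigma$ then gives
\begin{equation*}
	\norm{k_q}_{\mathcal G^3(\bar\Omega\times\bar\Omega)}\lesssim\norm{q}_{L^\infty(U)}\le C=C(n,\Sigma,\Xi,T')\,,\qquad q\in K\,,
\end{equation*}
the last bound from the embedding $H^{\mu+\delta}(U)\hookrightarrow L^\infty(U)$, valid since $\mu+\delta>n/2$. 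I expect this step to be the main obstacle: making Lebeau's diffractive Gevrey-$3$ regularity effective and uniform in $q\in K$, threading the merely $L^\infty$ potential (harmless only because $\bar\Sigma$ is disjoint from $\mathcal O$ and $\Xi$) through the composition, and thereby securing \emph{joint} Gevrey-$3$ regularity of $k_q$ rather than regularity separately in the two sets of variables, are the delicate points, and are exactly what the machinery of \cite{zbMATH07465842} is for.

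\emph{Step 2 (entropy comparison).} Since $K$ is the unit ball of the subspace of $H^{\mu+\delta}(U)$ of functions supported in the fixed compact set $\bar\Sigma\subset\RR^n$, standard Sobolev-space entropy estimates provide, for each small $\eps>0$, an $\eps$-separated subset $\{q_1,\dots,q_M\}\subset K$ in the $H^\mu(U)$-metric with $\log M\gtrsim\eps^{-n/\delta}$. By Step 1 the set $\{k_q:q\in K\}$ lies in a fixed ball of $\mathcal G^3$ on the $2(n+1)$-dimensional compact set $\bar\Omega\times\bar\Omega$; since the $\eta$-covering number in $L^2$ of a ball of $\mathcal G^s$ on a $d$-dimensional compact set obeys $\log N\lesssim\abs{\log\eta}^{sd+1}$, taking $s=3$, $d=2(n+1)$ and using the Hilbert--Schmidt bound $\norm{F(q_i)-F(q_j)}_{L^2(\Omega)\to L^2(\Omega)}\le\norm{k_{q_i}-k_{q_j}}_{L^2(\Omega\times\Omega)}$ gives $\log N\bigl(\eta,\{F(q):q\in K\},\norm{\cdot}_{L^2(\Omega)\to L^2(\Omega)}\bigr)\lesssim\abs{\log\eta}^{6n+7}$. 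Now cover $F(q_1),\dots,F(q_M)$ by $N(\eta)$ operator-norm balls of radius $\eta$: if $\omega(2\eta)<\eps$, then indices $i,j$ with $F(q_i),F(q_j)$ in a common ball satisfy $\norm{q_i-q_j}_{H^\mu(U)}\le\omega\bigl(\norm{F(q_i)-F(q_j)}_{L^2(\Omega)\to L^2(\Omega)}\bigr)\le\omega(2\eta)<\eps$ and so $i=j$, whence $M\le N(\eta)$ and therefore $\eps^{-n/\delta}\lesssim\abs{\log\eta}^{6n+7}$. Equivalently, whenever $\eps$ lies below a small fixed multiple of $\abs{\log\eta}^{-\delta(6n+7)/n}$ one must have $\omega(2\eta)\ge\eps$; letting $\eps$ increase to that multiple gives $\omega(2\eta)\gtrsim\abs{\log\eta}^{-\delta(6n+7)/n}$, i.e. $\omega(s)\gtrsim\abs{\log s}^{-\delta\frac{6n+7}{n}}$ for $s$ small, which is the assertion.
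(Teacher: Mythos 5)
Your overall strategy --- shadow-region Gevrey-$3$ smoothing of $S_q-S_0$ between $\bar\Sigma$ and $\Xi$, followed by a Mandache-type entropy comparison --- is the same as the paper's, and your Step 2 is essentially a correct re-derivation, at the level of Schwartz kernels, of the abstract result the paper invokes as a black box (\cite[Thm.~4.2(b)]{zbMATH07465842}); the exponent $3\cdot 2(n+1)+1=6n+7$ comes out the same way. The genuine gap is in Step 1, at exactly the point you flag: the passage from the \emph{qualitative} statement that $\mathbb{1}_\Omega S_0 g$ is $G^3$ for each fixed $L^2$ source $g$ supported in $(0,T)\times\bar\Sigma$ (Lebeau's diffraction theorem plus Melrose--Sj\"ostrand/H\"ormander propagation; the paper's \cref{cor:gev}) to the \emph{quantitative} claim that the kernels $k_q$, $q\in K$, lie in one fixed ball of $G^3$ on $\bar\Omega\times\bar\Omega$, i.e.\ derivative bounds $C A^{\abs{\gamma}}(\gamma!)^3$ with a single pair $(C,A)$ uniform over $q\in K$ and over unit $L^2$ sources. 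You assert this ``by duality'' and appeal to an ``effective, $q$-uniform form provided by \cite{zbMATH07465842}'' of Lebeau's theorem, but no such effective form exists there: that paper supplies only the abstract instability mechanism and the $A^{\sigma,\rho}$ framework, whose hypothesis is precisely that $F(K)$ is bounded in a fixed $W^{3,\rho}$. Since $G^3=\bigcup_{\rho>0}A^{3,\rho}$, qualitative regularity of each individual output produces constants (a $\rho$) depending on that output, and an extra argument is needed to get one $\rho$ and one bound for the whole unit ball. The paper closes this by factorizing in the other order, $S_q-S_0=-S_0\,q\,S_q$, so that the smoothing factor $\chi S_0\circ\mathbb{1}_{(0,T)\times\Sigma}$ is $q$-independent and all $q$-dependence sits in an $L^2\to L^2$ factor controlled by the energy estimate and Sobolev embedding (\cref{lem:onmani}), and by upgrading the qualitative map $L^2\to G^3(Y)$ to a bounded map $L^2\to A^{3,\rho}(N)$ via the closed graph theorem together with Komatsu's vector-valued Gevrey kernel theorem (\cref{prop:kernel}, \cref{lem:inA}); the time-reversed factor corresponding to your $(t',x')$-derivatives is handled through the adjoint (\cref{lem:adjoint}). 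Your bootstrap $S_q h=S_0(h-qS_q h)$ does correctly reduce the merely $L^\infty$ potential to the free propagator, so your route could be repaired by the same closed-graph/kernel-theorem device, but as written the fixed Gevrey ball feeding your entropy count is unproven.

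A secondary gap is geometric: the midpoint observation only rules out straight segments from $B_r(2e_1)$ to $B_r(-2e_1)$. Propagation of singularities runs along \emph{generalized} bicharacteristics, so you must also exclude rays reaching $\Xi$ after reflecting or glancing on $\del\mathcal{O}$; this is the content of the paper's \cref{lem:hyp} and \cref{lem:bich} (after a transversal reflection at $x^\circ\in\del\mathcal{O}$ the broken ray stays on one side of the tangent hyperplane at $x^\circ$ while $B_r(2e_1)$ lies strictly on the other, and glancing contacts with the strictly convex obstacle are diffractive, so the ray continues as a straight segment). With that in place, the only singularities reaching $\Omega$ are diffracted ones, which is also why nothing better than Gevrey~$3$ can be expected and why $\sigma=3$ enters the exponent.
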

That $S_q$ is well-defined for $q \in K$ follows from the Sobolev embedding and \cref{lem:ex}. For a graphical representation of the sets $\mathcal{O}, B_r(\pm 2e_1)$ and the notion that $B_r(2e_1)$ lies in the `shadow region' of $\mathcal{O}$ from the perspective of $B_r(-2e_1)$, we point to \cref{fig:rays} below. 

The set $U$ was chosen so that for sources $f\in L^2(X_+)$ with $\supp \,f\subset [0,T]\times \overline{B_3(0)}$, by the finite speed of propagation, $S_q(f)$ vanishes identically near $(-T,T)\times \del B_{T+4}(0)$, the `far' boundary of $X$. Thus, the extension of $S_q(f)$ by $0$ outside of $U$ solves%
\[
    (\Box+q)S_q(f) = f\ \ \text{in}\ \ (-T,T)\times \RR^n\setminus \mathcal{O}\,,\quad S_q(f)\vert_{t<0}=0\,,\quad S_q(f)\vert_{(-T,T)\times\del\mathcal{O}}=0\,,
\]
when $\supp \,f\subset [0,T]\times \overline{B_3(0)}$, where we recall that $T>0$ was chosen freely. This clarifies that the only significant geometric feature of the space $X$ is the subset of its boundary $(-T,T)\times\del\mathcal{O}$. Because $B_r(\pm 2e_1)\subset B_{3}(0)$, considering only sources $f$ satisfying $\supp \,f\subset [0,T]\times \overline{B_3(0)}$ is no restriction in the context of \cref{thm:main}.

\cref{thm:main} will come as a consequence of the qualitative statement of the propagation of singularities (\cref{cor:gev}) together with facts from functional analysis to get a quantitative statement. Briefly: we have access to a complete description of the propagation of smooth singularities via \cite[Thm.~24.5.3]{zbMATH05129478} if $q\in C^\infty$. Furthermore, due to \cite{zbMATH03973398} (in particular \cite{zbMATH03891823}, and see also \cite[Thm.~2.1]{zbMATH04017953}), together with \cite[Thm.~7.3]{zbMATH03359011}, we have a complete description of the Gevrey-$\sigma$ singularities for $\sigma \geq 1$ if $q \in C^\omega$. We reduce to the case $q=0$ by considering $S_q-S_0$ so that these propagation results become applicable. The existence of the obstacle $\mathcal{O}$ prevents (Gevrey-$3$-)singularities of sources supported in $(0,T)\times \Sigma$ from propagating and giving rise to singularities of the solution in $\Omega$. The machinery of \cite{zbMATH07465842} will turn this smoothing behavior into an instability statement.%

The significance of \cref{thm:main} lies in the fact that the Boundary Control method, pioneered by M. Belishev in \cite{MR924687} and extended to Riemannian manifolds in \cite{zbMATH00147418}, see also \cite{MR1889089}, implies that one can indeed recover $q$ uniquely from the knowledge of $S_q$ (on $\Omega$) for $T<\infty$ sufficiently large: in the setting of a closed manifold and with $q \in C^\infty$, this is shown in \cite{saksala2025inverseproblemsymmetrichyperbolic} (see Remark 1.2 therein), whereas for $q\in L^\infty$, in Euclidean space without the presence of an obstacle see \cite[Thm.~1.1]{filippas2025stabilityinverseproblemwaves}. In the setting without an obstacle, \cite[Thm.~1.2]{filippas2025stabilityinverseproblemwaves} proves log-log stability of the solution operator $S_q$. For emphasis we point out that in the situation of \cref{thm:main}, despite the fact that measurements take place in a bounded domain for a large period of time, recovery is shown to be exponentially unstable.

For a different choice of measurement set $\Omega$, recovery of $q$ from $S_q$ can be proved by reduction to the closely related inverse problem using the Dirichlet-to-Neumann (DN) map as data, which we briefly elaborate on here (see also \cite{zbMATH02094176} for a discussion of various equivalent inverse problems). 
For $W\subset U$ open, $q\in L^\infty(W)$ and $H^1_+((0,T)\times\del W)\coloneqq \{\varphi \in H^1((0,T)\times \del W)\colon \varphi(0,\cdot) = 0\}$ one considers the operator $\Lambda_q\colon H^1_+((0,T)\times\del W)\to L^2((0,T)\times \del W)$ defined via $\Lambda_q \colon \varphi \mapsto \del_\nu v\vert_{(0,T)\times \del W}$, where $\nu$ is the outward pointing unit normal at $\del W$ and $v\in C([0,T];H^1(W))\cap C^1([0,T];L^2(W))$ is the unique solution of 
\[
	\Box v + q v = 0\quad\text{in}\ \ (0,T)\times W\,,\qquad v\vert_{t=0}=0=\del_t v\vert_{t=0}\,,\quad v\vert_{(0,T)\times\del W}=\varphi\,.
\]
That $\Lambda_q$ is well-defined is guaranteed, for example, by \cite[Lem.~1.2]{zbMATH06011852}. One then asks whether the knowledge of the map $\Lambda_q$ uniquely determines $q$, which is answered positively in \cite{zbMATH04092031} if $T>\mathrm{diam}\,W$. Here we note that in the case of data on the full boundary as we have here, the Dirichlet-to-Neumann and Neumann-to-Dirichlet data formulations are equivalent.

If we denote by $\eta\colon \{\varphi \in C^\infty((0,T)\times\del W)\colon \supp \,\varphi\Subset \{t\geq 0\}\} \to C^\infty((0,T)\times U)$ an extension operator (that is $\eta(\varphi)\vert_{(0,T)\times \del W} = \varphi$ for all $\varphi$), then a calculation shows that for all $\varphi \in C^\infty((0,T)\times\del W)$ with $\supp \,\varphi\Subset \{t\geq 0\}$,
\[
	\Lambda_q(\varphi) = \del_\nu(\eta(\varphi)-S_q((\Box+q)\eta(\varphi)))\vert_{(0,T)\times\del W}\,.
\]
We conclude (using a density argument) that if $W\subset U$ is open, $\supp \,q \subset W$, and the measurement set $\Omega$ contains a neighborhood of the boundary $(-T,T)\times\del W$, the inverse problem for the source-to-solution map with data on $\Omega$ can be reduced to that with the DN map as data, which admits a large body of literature. In particular, for the DN-map formulation with $W=B_R(0) \setminus \mathcal{O}$ with $R$ fixed (so one has measurements on both the outer and inner boundaries of $W$), \cite{zbMATH04212656} shows that recovery of $q$ is H\"older stable if $T$ is sufficiently large, see also \cite{zbMATH06011852}. We refer also to \cite{zbMATH06291149,zbMATH07138389,zbMATH07684958} and the references therein for further stability results for the case of DN-map data.

Another related case is where one has access to measurements on the full outer boundary $\partial B_{R}(0)$, but there is a vanishing Dirichlet boundary condition on $\partial \mathcal{O}$. In this case, if $T$ is sufficiently large, the DN-map determines H\"older stably the integrals of $q$ over line segments not touching $\mathcal{O}$ via geometrical optics solutions \cite{zbMATH06881246}. From these integrals one can determine $q$ H\"older stably when $n \geq 3$ (e.g.\ by inverting the X-ray transform on two-dimensional slices), whereas for $n=2$ one would also need to use broken lines reflecting on $\partial \mathcal{O}$ \cite{zbMATH02152806}. \cref{thm:main} corresponds to a setting where one has measurements in a smaller region, thus leading to exponential instability of the inverse problem.

We close the introduction by mentioning that in the setting of the Calder\'on problem, it follows from \cite{zbMATH03998383} and \cite{zbMATH01691016} that recovery of a potential for the Schr\"odinger operator using the Dirichlet-to-Neumann map is logarithmically stable, and that this stability is optimal, see also the discussion in \cite[\S~1.1]{zbMATH07465842}. In the absence of a potential $q$, the Gel'fand problem of recovering properties of a manifold (such as its metric) from spectral data of the Laplacian is discussed in \cite{zbMATH02132012,zbMATH07511975,zbMATH08023890}, the first of which establishes an abstract stability result and the latter two show log-log stability.

\subsection*{Acknowledgments}
This research was performed while L.B.\ was visiting the University of Helsinki. M.L.\ was partially supported by the Advanced Grant project 101097198 of the European Research Council, Centre of Excellence of Research Council of Finland (grant 336786) and the FAME flagship of the Research Council of Finland (grant 359186). L.O. was supported by the European Research Council of the European Union, grant 101086697 (LoCal),
and the Research Council of Finland, grants 347715, 353096 (Centre of Excellence of Inverse Modelling and Imaging) and 359182 (Flagship of Advanced Mathematics for Sensing Imaging and Modelling). M.S.\ was supported by the Research Council of Finland (Centre of Excellence in Inverse Modelling and Imaging and FAME Flagship, grants 353091 and 359208). Views and opinions expressed are those of the authors only and do not necessarily reflect those of the European Union or the other funding organizations. 

\section{The Proof}

Let $p(t,x,\tau,\xi) = -\tau^2 + |\xi|^2$ be the principal symbol of $\Box$, which will remain fixed throughout. We begin with two purely geometric statements that can be summarized informally as: generalized bicharacteristic arcs of $p$ with a point in the cotangent bundle $T^\ast ((0,T)\times B_{r}(-2e_1))$ cannot have `originated' from a neighborhood of $T^\ast(\{0\}\times B_{r}(2e_1))$. The following two statements are immediate when one draws a picture (see \cref{fig:rays}), but we give the full details nevertheless, first fixing some terminology. If $H$ is a hyperplane in $\RR^n$ defined by $H = \{x \colon (x-x^\circ)\cdot x^\circ = 0\}$ where $x^\circ \in \RR^n$ is some point, we say that a point $x\in \RR^n$ lies above (resp. below) $H$ if $(x-x^\circ)\cdot x^\circ > 0$ (resp. $<0$). 
\begin{lemma}\label{lem:hyp}
	Let $t^\circ>0$ and $\gamma \colon [0,t^\circ] \to \bar U$ be a line segment with $\gamma(0) \in B_{r}(-2e_1)$, $\gamma(t^\circ)\in \del \mathcal{O}$, and $H$ the tangential hyperplane of $\del\mathcal{O}$ at the point $\gamma(t^\circ)$, oriented so that the origin lies below $H$. The set $B_{r}(2e_1)$ lies below $H$.
\end{lemma}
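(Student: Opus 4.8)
The plan is to reduce the statement to an elementary calculation with balls and tangent planes, exploiting the fact that $\mathcal O = \overline{B_1(0)}$ is the unit ball, so its tangent hyperplane at a boundary point $y \in \partial\mathcal O$ is simply $H = \{x : (x-y)\cdot y = 0\}$, i.e. $x^\circ = y$ in the notation of the excerpt, and ``below $H$'' means $(x-y)\cdot y < 0$, equivalently $x \cdot y < 1$. So I want to show that every $z \in B_r(2e_1)$ satisfies $z \cdot y < 1$, where $y = \gamma(t^\circ) \in \partial\mathcal O$ is the point where the segment $\gamma$ starting in $B_r(-2e_1)$ first meets the obstacle.

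First I would pin down where $y$ can lie. Since $\gamma$ is a line segment from a point $a \in B_r(-2e_1)$ to $y \in \partial\mathcal O$, and the segment stays in $\bar U$ (in particular does not cross into the interior of $\mathcal O$ before time $t^\circ$), the point $y$ must be ``visible'' from $a$: the open segment $(a,y)$ is disjoint from $\mathcal O$. Geometrically this forces $y$ to lie in the half of $\partial\mathcal O$ facing $a$, and since $a$ is close to $-2e_1$ (within distance $r<1$), one gets a quantitative bound of the form $y \cdot e_1 < c$ for some constant $c<1$ depending only on $r$; concretely, the tangent lines from $a$ to the unit circle touch it at points with first coordinate at most $1/|a| \le 1/(2-r)$, so $y\cdot e_1 \le 1/(2-r) < 1$. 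I would make this precise by the standard fact that the set of $y\in\partial B_1(0)$ visible from an exterior point $a$ is exactly $\{y : (y-a)\cdot y \le 0\} = \{y : y\cdot y \le a \cdot y\}$, i.e. $\{y : 1 \le a\cdot y\}$, wait — that gives $a \cdot y \ge 1$, and with $a \approx -2e_1$ this says $y$ points ``towards'' $-e_1$, so $y\cdot e_1$ is bounded above by something strictly less than $1$ (indeed negative-ish). This is the step requiring a little care with signs and with the fact that $a$ ranges over a ball rather than a single point, but it is elementary.

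Then I would combine: for $z \in B_r(2e_1)$, write $z = 2e_1 + w$ with $|w| < r$, and estimate $z\cdot y = 2(e_1\cdot y) + w\cdot y \le 2(e_1\cdot y) + r$ using $|y|=1$. Since the visibility constraint gives $e_1 \cdot y$ bounded above by a quantity $c_0 < 1/2$ (from $a\cdot y \ge 1$ with $a$ within $r$ of $-2e_1$, so $-2(e_1\cdot y) + r \ge a\cdot y \ge 1$, hence $e_1\cdot y \le (r-1)/2$), I obtain $z\cdot y \le (r-1) + r = 2r-1 < 1$, in fact $< 1$ with room to spare. Hence $(z - y)\cdot y = z\cdot y - 1 < 0$, which is exactly the assertion that $z$ lies below $H$. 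One should double-check the orientation convention: the origin lies below $H$ since $(0 - y)\cdot y = -1 < 0$, consistent with the hypothesis, and the computation above shows $B_r(2e_1)$ is on the same side.

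The main obstacle — really the only non-routine point — is handling the visibility condition correctly: one must verify that a segment from $a \in B_r(-2e_1)$ that reaches $\partial\mathcal O$ at $y$ without previously entering $\mathrm{int}\,\mathcal O$ indeed forces $a\cdot y \ge 1$ (equivalently that $y$ is on the near side of the sphere as seen from $a$), and to track that the relevant constant stays strictly below the threshold uniformly as $a$ ranges over $B_r(-2e_1)$ and $z$ over $B_r(2e_1)$. Everything else is a one-line dot-product estimate. I would present the visibility fact as a short self-contained sublemma (``if the open segment from an exterior point $a$ to $y\in\partial B_1(0)$ misses $\overline{B_1(0)}$, then $a\cdot y \ge 1$''), prove it by noting that $t\mapsto |a + t(y-a)|^2$ is a convex quadratic equal to $1$ at $t=1$ and $>1$ for $t\in[0,1)$, so its derivative at $t=1$ is $\ge 0$, which rearranges to $(y-a)\cdot y \le 0$.
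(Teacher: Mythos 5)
Correct, and essentially the same argument as the paper's: both proofs reduce to showing that the starting point $a=\gamma(0)$ lies on the non-obstacle side of the tangent plane at $y=\gamma(t^\circ)$ (your visibility inequality $a\cdot y\ge 1$ is precisely the paper's statement that $\gamma(0)$ lies above or on $H$, which the paper justifies by noting that a segment dipping below $H$ at $y$ would have to pass through $\mathcal{O}$, while you use convexity of $t\mapsto|a+t(y-a)|^2$), then both deduce $e_1\cdot y\le (r-1)/2<0$ and finish with the same dot-product estimate over $B_r(2e_1)$. One sign slip to fix in your sublemma: since $|a+t(y-a)|^2\ge 1$ on $[0,1]$ with equality at $t=1$, the derivative there is $\le 0$, not $\ge 0$; that is exactly what rearranges to $(y-a)\cdot y\le 0$, so the conclusion you state is the correct one.
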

\begin{proof}
	Denoting $x^\circ = \gamma(t^\circ)$, the normal of $\del\mathcal{O}$ at $x^\circ$ is given by $x^\circ$, so that $H$ is defined as $H = \{x\colon (x-x^\circ)\cdot x^\circ = 0\}$.%

	Because any line segment below $H$ intersecting $H$ at $x^\circ$ must pass through $\mathcal{O}$ and $\gamma$ describes a line segment in $\bar U$, all of $\gamma(t)$ and thus $\gamma(0)$ must lie above (or on) $H$. %
	Let $\gamma(0) = x_0 \in B_{r}(-2e_1)$, which can be written as $x_0 = -2e_1+y_0$ for some $\abs{y_0}<r$. Because $x_0$ lies above (or on) $H$, we conclude that
	\[
		0 \leq (x_0-x^\circ)\cdot x^\circ = -2x^\circ_1 -1 + y_0\cdot x^\circ \leq -2x^\circ_1-1+r\,,
	\]
	where we used that $\abs{x^\circ}=1$ and the Cauchy-Schwarz inequality, and $x^\circ_1$ is the first component of $x^\circ$. We conclude that $x^\circ_1 < 0$. 

	Similarly, any point $x\in B_{r}(2e_1)$ can be written as $x = 2e_1 + y$ with $\abs{y}< r$ so that
	\[
		(x-x^\circ)\cdot x^\circ < 2x_1^\circ + r-1 < r-1\,,
	\]
	where we used $x^\circ_1 < 0$. This shows that $B_{r}(2e_1)$ lies below $H$, completing the proof.
\end{proof}

\begin{lemma}\label{lem:bich}
	For every $\nu^\ast = (t^\ast,x^\ast,\tau^\ast,\xi^\ast)\in T^\ast((0,T)\times B_{r}(-2e_1))\sem$ with $\abs{\tau^\ast} = 1$ and $\nu^\ast \in p^{-1}(0)$, the forward generalized bicharacteristic arc $\gamma$ (according to \cite[Def.~24.3.7]{zbMATH05129478} or \cite{zbMATH03574269}) on $\RR\times U$ with initial condition $\gamma(t^\ast)=\nu^\ast$ satisfies $\gamma(t) \in T^\ast (\{t\}\times (U\setminus B_{r}(2e_1)))\sem$ for $t\in [0,t^\ast]$. Here, \emph{forward} means that the projection of $\gamma$ onto the $t$ component is an increasing function. %
\end{lemma}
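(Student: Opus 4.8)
The plan is to reduce \cref{lem:bich} to elementary billiard geometry and then invoke \cref{lem:hyp}. Write $x^\ast=\pi_x\nu^\ast\in B_r(-2e_1)$. Since $p$ does not depend on $(t,x)$ in the interior and reflections off $\RR\times\del\mathcal{O}$ preserve the pair $(t,\tau)$, the covariable $\tau$ is constant along $\gamma$; thus $\abs{\tau}\equiv 1$ and, on $p^{-1}(0)$, $\abs{\xi}\equiv 1$. Hence $\gamma$ is parametrized by the time variable $t$ and its spatial projection $x(t)=\pi_x\gamma(t)$ has unit speed, $\abs{\dot x}=\abs{\xi}/\abs{\tau}=1$. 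As $\mathcal{O}$ is strictly convex there are no gliding segments, so by the structure of generalized bicharacteristics in the presence of such an obstacle (see \cite[\S24.3]{zbMATH05129478}) the reversed curve $\rho(s)\coloneqq x(t^\ast-s)$, $s\in[0,t^\ast]$, is a unit-speed billiard trajectory in $\bar U$ issued from $\rho(0)=x^\ast$ that reflects transversally off $\del\mathcal{O}$, or else grazes it tangentially and continues as a straight line. Because $\rho$ has length $t^\ast<T$ while $\abs{x^\ast}<3$, it stays inside $B_{T+3}(0)$ and never reaches the outer boundary $\del B_{T+4}(0)$; hence it only ever touches $\del\mathcal{O}$, and since $\del\mathcal{O}\cap B_r(2e_1)=\emptyset$ it suffices to show $\rho(s)\notin B_r(2e_1)$ for all $s\in[0,t^\ast]$.

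Suppose first that $\rho|_{[0,t^\ast]}$ never meets $\del\mathcal{O}$, so that it is a single line segment issued from $x^\ast$. For any $y\in B_r(2e_1)$, writing $x^\ast=-2e_1+u$ and $y=2e_1+v$ with $\abs{u},\abs{v}<r$, the midpoint of $[x^\ast,y]$ equals $(u+v)/2$ and has norm $<r<1$, so $[x^\ast,y]$ meets the open ball $B_1(0)$; but $B_1(0)$ is disjoint from $\bar U\supset\rho|_{[0,t^\ast]}$, so the segment $\rho|_{[0,t^\ast]}$ cannot reach $y$. Thus $\rho$ avoids $B_r(2e_1)$ in this case.

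Otherwise let $s_1\in(0,t^\ast]$ be the first parameter at which $\rho$ meets $\del\mathcal{O}$. Then $\rho|_{[0,s_1]}$ is a line segment in $\bar U$ joining $x^\ast\in B_r(-2e_1)$ to $x^\circ\coloneqq\rho(s_1)\in\del\mathcal{O}$, so \cref{lem:hyp} applies: the tangent hyperplane $H$ of $\del\mathcal{O}$ at $x^\circ$, oriented so that the origin lies below it, has $B_r(2e_1)$ strictly below it, and the proof of \cref{lem:hyp} shows moreover that $\rho([0,s_1])$ lies on or above $H$, i.e.\ $g(s)\coloneqq(\rho(s)-x^\circ)\cdot x^\circ\geq 0$ on $[0,s_1]$ with $g(s_1)=0$. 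The unit outward normal of $\mathcal{O}$ at $x^\circ$ is $x^\circ$ itself, so the reflection (or grazing) at $x^\circ$ obeys $\dot\rho(s_1^+)=\dot\rho(s_1^-)-2\bigl(\dot\rho(s_1^-)\cdot x^\circ\bigr)x^\circ$, whence $g'(s_1^+)=-g'(s_1^-)\geq 0$ because $g$ decreases to $0$ as $s\uparrow s_1$. As $\mathcal{O}\subset\{g\leq 0\}$ while $\rho$ then moves into $\{g\geq 0\}$, the trajectory makes no further contact with $\del\mathcal{O}$ (nor, as noted, with $\del B_{T+4}(0)$), so $\rho|_{[s_1,t^\ast]}$ is a single ray along which the affine function $g$ is non-decreasing; hence $g\geq 0$ on all of $[0,t^\ast]$. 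Since $B_r(2e_1)\subset\{g<0\}$ by \cref{lem:hyp}, this gives $\rho([0,t^\ast])\cap B_r(2e_1)=\emptyset$ and completes the proof.

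I expect the two points that need care to be the reduction to a billiard trajectory — which rests on the absence of gliding rays for a strictly convex obstacle and on the length bound $t^\ast<T$ that keeps the trajectory clear of $\del B_{T+4}(0)$ (this being the role of the particular choice of $U$) — and the reflection step, where one must verify that past the first contact the trajectory moves \emph{away} from $H$ and so cannot re-enter the shadow region below it. Everything else is elementary planar geometry; note finally that over $[0,t^\ast]$ the trajectory meets $\del\mathcal{O}$ at most once, so the base point of $\gamma$ lies outside $U$ for at most one instant, at a point disjoint from $B_r(2e_1)$, which does not affect the conclusion.
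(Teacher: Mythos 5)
Your proof is correct and takes essentially the same route as the paper's: exclude the far boundary via unit speed, treat the no-contact and glancing (diffractive, hence straight) cases as a single line segment, and for a transversal reflection invoke \cref{lem:hyp} at the tangent hyperplane $H$ to conclude the ray stays on the side of $H$ opposite $B_{r}(2e_1)$. Your time-reversed parametrization, explicit Snell law, and scalar function $g$ are only cosmetic variants of the paper's decomposition of $\sigma_\pm$ into tangential and normal components.
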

\begin{proof}
	The following general remark will be used throughout the rest of the proof. Because $\gamma$ travels with unit speed forward in time ($\abs{\pi_\tau \gamma(t)}=\abs{\tau^\ast}=1, t\in [0,t^\ast]$) and the `far' boundary $\RR\times \del B_{T+4}(0)$ was chosen far away, and $\pi_x\gamma(t^\ast)\in B_{r}(-2e_1)$, we have $\pi_x\gamma(t) \not\in \del B_{T+4}(0)$ for all $t\in [0,t^\ast]$. We make a case distinction, see also \cref{fig:rays} for a pictorial view of the behavior of the ray $\pi_x\gamma$.
\begin{figure}
	\centering
	\begin{tikzpicture}
		\node[circle,draw,thick, minimum size=50pt,inner sep=0pt, outer sep=0pt] at (0,0) {};
		\node[circle,draw,thick, minimum size=37.5pt,inner sep=0pt, outer sep=0pt] at (-100pt,0) {};
		\node[circle,draw,thick, minimum size=37.5pt,inner sep=0pt, outer sep=0pt] at (100pt,0) {};
		\node at (-110pt,27pt) {$B_{r}(-2e_1)$};
		\node at (110pt,27pt) {$B_{r}(2e_1)$};
        \node at (-56pt,38pt) {$H$};
		\node at (15pt,-28pt) {$\mathcal{O}$};
		\draw[line width={0.7pt},dotted] (-110pt,10pt) -- (100pt,-15pt);
		\draw[line width={0.7pt}] (-90pt,-10pt) -- (70pt,57pt);
		\draw[line width={0.7pt}] (-95pt,5pt) -- (220:25pt);
		\draw[line width={0.7pt}] (220:25pt) -- (249:42pt);
		\draw[line width={0.6pt},dashed] (-70pt,40pt) -- (3pt,-41pt);
	\end{tikzpicture}
	\caption{A pictorial representation of $B_{r}(\pm 2e_1), \mathcal{O}$ in $U$ considered the ambient space with $\del B_{T+4}\subset \del U$ outside of frame. The following discussion is in the context of the proof of \cref{lem:bich}. The dotted line represents case 1: a line segment with endpoints in $B_{r}(\pm2e_1)$ must pass through the obstacle $\mathcal{O}$ (in particular intersecting $\del\mathcal{O}$); a ray cannot follow this path. The continuous lines are the two possible cases 2a, 2b: a ray starting in $B_{r}(-2e_1)$ intersects $\del\mathcal{O}$ either glancingly and continues as the same line segment, staying away from $B_{r}(2e_1)$. Or, the ray intersects $\del\mathcal{O}$ transversally, being reflected according to Snell's law off the tangential plane $H$ at the intersection point, represented here by the dashed line. In the latter case one sees that the ray always stays on one side of $H$, whereas $B_{r}(2e_1)$ lies on the other side of $H$.}
	\label{fig:rays}
\end{figure}
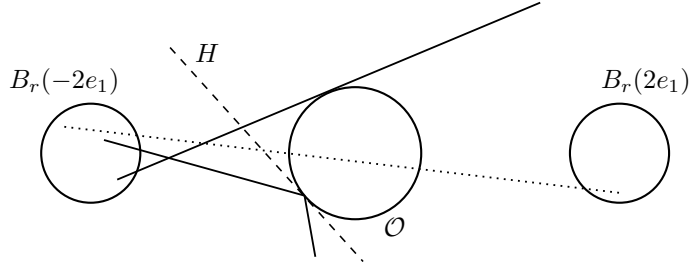
	
	Case 1: the generalized bicharacteristic arc defined by $\gamma(t)$, when projected onto the $x$-component, does not intersect $\del \mathcal{O}$ while $t\in [0,t^\ast]$. This means that $\pi_x\gamma(t), t\in [0,t^\ast]$ describes a line segment with $\pi_x\gamma(0)\in B_{r}(-2e_1)$, and if it were to intersect $B_{r}(2e_1)$, some point $\pi_x\gamma(t)$ must lie on $\del\mathcal{O}$, a contradiction. We thus find that $\pi_x\gamma(t) \not\in B_{r}(2e_1)$ for all $t\in[0,t^\ast]$, which completes the consideration of this case.

	Case 2: there is $t^\circ \in (0,t^\ast)$ so that %
	$\pi_{(t,x)} \gamma(t^\circ)\in (0,T)\times \del \mathcal{O}$. 
	We remark here that the normal of $\RR\times \del \mathcal{O}$ at any $(t,x)\in (0,T)\times\del\mathcal{O}$ is given by $(0,x)$. 

	Case 2a: the generalized bicharacteristic arc $\gamma$ intersects the boundary $(0,T)\times\del \mathcal{O}$ tangentially (glancingly) when $t=t^\circ$ (which is to say that $\lim_{t\to^+ t^\circ}\pi_{(\tau,\xi)}\gamma(t)\perp (0,\pi_{x}\gamma(t^\circ))$). Due to the fact that $\mathcal{O}$ is strictly convex and the remarks before \cite[Def.~24.3.2]{zbMATH05129478}, any glancing intersection of $\gamma$ with the boundary $(0,T)\times\del\mathcal{O}$ must be diffractive. 
	By the definition of a generalized bicharacteristic arc, $\gamma$ is unperturbed by diffractive intersections and continues as the same line segment. This implies that $\pi_x\gamma(t), t\in [0,t^\ast]$ intersects $\del U$ only at $t=t^\circ$, so that $\pi_x\gamma \colon [0,t^\ast]\to \bar U$ describes a line segment, which, as in case 1 implies that $\pi_x\gamma(t)\not\in B_{r}(2e_1)$ for all $t\in [0,t^\ast]$.

	Case 2b: the generalized bicharacterisitc arc $\gamma$ intersects the boundary $(0,T)\times \del \mathcal{O}$ transversally at $t=t^\circ$. We are thus in the case of an intersection in the hyperbolic region. Let $\gamma(t_\pm^\circ)\coloneqq \lim_{t\to^\pm t^\circ} \gamma(t) = ((t^\circ,x^\circ),\sigma_\pm)$ and let $H = \{y\in X\colon (y-(0,x^\circ))\cdot (0,x^\circ)\}$ be the tangential hypersurface of $(0,T)\times\del\mathcal{O}$ at $(0,x^\circ)$.  

	We decompose $\sigma_\pm$ into tangential and normal components with respect to $H$: $\sigma_\pm = \sigma_\pm^t + \sigma_\pm^n$, where $\sigma_\pm^n \in \RR (0,x^\circ)$ and $\sigma_+^n \neq 0$ by assumption of transversal intersection. In fact, because hyperbolic intersections are isolated (and there cannot have been a glancing intersection) we must have $\sigma_+^n\cdot (0,x^\circ)>0$ since $\pi_{(t,x)}\gamma(t)\in X$ for $t\in(t^\circ,t^\circ+\eps)$ for some $\eps>0$ (ie. $\gamma$ `comes from inside the set $X$'). By the definition of a generalized bicharacteristic arc, we must have $\sigma^+_t = \sigma^-_t$, and because $\gamma(t_\pm^\circ) \in p^{-1}(0)$, we find that $\sigma_-^n = -\sigma_+^n$ so that $\sigma_-^n \cdot (0,x^\circ)<0$. 

	We conclude that $\pi_{(t,x)}\gamma(t)$ lies above the hypersurface $H$ for $t$ near but unequal to $t^\circ$. Further (using that $(0,T)\times \mathcal{O}$ lies below $H$ aside from at $\RR\times \{x^\circ\}$), we see that $\pi_x \gamma(t)$ does not intersect $\del U$ for $t\in [0,t^\ast]\setminus t^\circ$, so that %
	$\pi_{(t,x)}\gamma$ always lies above the hypersurface $H$ for $t\in [0,t^\ast]\setminus t^\circ$.
	
	However, $(0,T)\times B_{r}(2e_1)$ lies below the hypersurface $H$ by \cref{lem:hyp}, completing the proof.
\end{proof}

Recall the definition of Gevrey spaces.
\begin{definition}\label{def:gev}
	For $d\in \mathbb{N}, \sigma \in [1,\infty)$ and $W\subset \RR^d$ open, we let $G^\sigma(W)$ be the set of all $f\in C^\infty(W;\mathbb{C})$ so that for every compact $K\subset W$ there is some $C>0$ so that for all multi-indices $\alpha$ one has
	\[
		\max_K \abs{\del^\alpha f} \leq C^{1+\abs{\alpha}}\abs{\alpha}^{\sigma\abs{\alpha}}\,.
	\]
\end{definition}
We remark that $G^1(W)$ is the set of analytic functions on $W$. The definition of the Gevrey-$\sigma$ wavefront set, an analogue of the smooth wavefront set in Gevrey-$\sigma$-regularity, can be found in \cite[\S~8.4]{hoermander1} and \cite{doi:10.1142/1550}; we shall not directly require this notion.

Let ${Y}\coloneqq (-T,T)\times B_{r}(-2e_1) \subset X$. We exploit our knowledge of the propagation of singularities together with the geometric statement \cref{lem:bich} to give
\begin{proposition}\label{cor:gev}
	For any open $\Sigma\Subset B_{r}(2e_1)$ and any $g\in L^2(U)$ with $\supp \,g\subset [0,T]\times \bar\Sigma$, if $w\in H^1(X)$ satisfies
	\[
		\Box w = g\quad\text{in}\ \ X\,,\qquad w\vert_{t<0}=0\,,\quad w\vert_{(-T,T)\times\del U}=0\,,%
	\]
	then $w\vert_{{Y}}\in G^3({Y})$.
\end{proposition}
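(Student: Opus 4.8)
The plan is to combine the analytic (Gevrey-$1$) propagation of singularities up to the boundary with the purely geometric obstruction furnished by \cref{lem:bich}. The key point is that $\Box$ has analytic coefficients and $\del\mathcal{O}$ is analytic, so the relevant wavefront sets to track are the analytic ones, and analytic singularities reflecting off a strictly convex analytic obstacle obey the diffractive calculus: at a glancing (hence diffractive) point the generalized bicharacteristic is continued unperturbed, and this continuation introduces at worst Gevrey-$3$ singularities. Concretely, I would argue as follows. Since $\supp g \subset [0,T]\times \bar\Sigma$ with $\Sigma\Subset B_r(2e_1)$, and $g\in L^2$, the source $g$ is (trivially) analytic — indeed smooth — on a neighborhood of $\bar Y = [-T,T]\times \overline{B_r(-2e_1)}$, because $\overline{B_r(-2e_1)}$ is disjoint from $\overline{B_r(2e_1)}$. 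Hence $\Box w = g$ has analytic right-hand side near $Y$, so any Gevrey-$3$ (in particular analytic) singularity of $w$ at a point over $Y$ must propagate along a generalized bicharacteristic of $p$ that is characteristic, i.e.\ lies in $p^{-1}(0)$, and this bicharacteristic must carry the singularity back from $t=0$, where $w$ vanishes identically (so $w$ is analytic there), through $X$, possibly reflecting off $(-T,T)\times\del\mathcal{O}$.

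The concrete steps are: (i) Invoke the a priori regularity from \cref{lem:ex} to know $w\in C([-T,T];H^1_0(U))\cap C^1([-T,T];L^2(U))$, so $w$ is a genuine finite-energy solution whose Cauchy data vanish at $t=0$; in particular $\wf(w)$ and the Gevrey-$3$ wavefront set of $w$ are empty over $\{t\le 0\}$. (ii) Suppose for contradiction that $w\vert_Y\notin G^3(Y)$; then there is a point $\nu^\ast = (t^\ast,x^\ast,\tau^\ast,\xi^\ast)$ with $(t^\ast,x^\ast)\in Y$, $(\tau^\ast,\xi^\ast)\ne 0$, lying in the Gevrey-$3$ wavefront set of $w$. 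Since $g$ is analytic near $Y$, ellipticity of $\Box$ off $p^{-1}(0)$ forces $\nu^\ast\in p^{-1}(0)$, so $\tau^\ast\ne 0$; rescaling, we may take $\abs{\tau^\ast}=1$. (iii) Apply the Gevrey propagation-of-singularities theorem up to the boundary for the Dirichlet problem on $\RR\times(U\setminus\mathcal{O})$ — here is where the citations to \cite{zbMATH03973398,zbMATH03891823,zbMATH04017953} (analytic singularities reflecting transversally), \cite{zbMATH03359011} (diffractive points for strictly convex analytic obstacles producing Gevrey-$3$ singularities), and \cite[Thm.~24.5.3]{zbMATH05129478} (the underlying smooth propagation picture) come in — to conclude that the backward generalized bicharacteristic $\gamma$ of $p$ with $\gamma(t^\ast)=\nu^\ast$ must reach $\{t=0\}$ while remaining in the Gevrey-$3$ wavefront set, unless it leaves $X$ through the far boundary (which it cannot, by the finite-speed/geometry remark: $\pi_x\gamma$ stays away from $\del B_{T+4}(0)$) or is absorbed into the analytic region. (iv) Run \cref{lem:bich}: the forward arc from any such characteristic covector over $(0,T)\times B_r(-2e_1)$ stays, for $t\in[0,t^\ast]$, over $U\setminus B_r(2e_1)$; reversing time, the backward arc from $\nu^\ast$ lands at $t=0$ at a point $\pi_x\gamma(0)\in U\setminus \overline{B_r(2e_1)}$ — hence outside $\bar\Sigma$ — where $w$ is analytic (its Cauchy data vanish there), so the Gevrey-$3$ wavefront set is empty at $\gamma(0)$. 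This contradicts the invariance of the Gevrey-$3$ wavefront set along generalized bicharacteristics away from the source. Therefore $w\vert_Y\in G^3(Y)$.

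The main obstacle, and the step needing the most care, is (iii): correctly citing and applying the Gevrey/analytic propagation of singularities \emph{up to and across the boundary} $(-T,T)\times\del\mathcal{O}$, for a Dirichlet problem, at both transversal (hyperbolic) and glancing (diffractive) reflection points. One must make sure (a) that the diffractive points of a strictly convex analytic obstacle indeed only create Gevrey-$3$ — not worse — singularities, so that working with the Gevrey-$3$ wavefront set is consistent (this is the content of \cite[Thm.~7.3]{zbMATH03359011} together with \cite{zbMATH03891823}), and (b) that the generalized bicharacteristic flow used in the propagation theorem is exactly the one of \cite[Def.~24.3.7]{zbMATH05129478}/\cite{zbMATH03574269} invoked in \cref{lem:bich}, so that the geometric lemma applies verbatim. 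A secondary, more routine point is passing from ``$q=0$'' (which is all we need here, since the statement is about $\Box w = g$, not $(\Box+q)w=g$) back to the setting of \cref{thm:main}; but within \cref{cor:gev} itself the operator is exactly $\Box$, so no reduction is needed — the analyticity of the coefficients is immediate, and the only inputs are the two propagation results and \cref{lem:bich}.
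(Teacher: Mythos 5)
Your strategy is essentially the paper's: take a (Gevrey-$3$) wavefront point $\nu^\ast$ of $w$ over $Y\cap\{t>0\}$, note $\nu^\ast\in p^{-1}(0)$ by elliptic regularity since $g$ vanishes near $Y$, propagate the singularity along the generalized bicharacteristic up to the boundary (H\"ormander's Thm.~24.5.3 for the smooth picture, Lebeau for the Gevrey-$3$ boundary behaviour, H\"ormander's Gevrey interior propagation), and use \cref{lem:bich} to place $\gamma(0)$ over $\{0\}\times(U\setminus B_r(2e_1))$, where $w$ is regular, giving the contradiction.

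There is, however, a gap at the endpoint regularity. You claim that the smooth and Gevrey-$3$ wavefront sets of $w$ are empty over $\{t\le 0\}$ because the Cauchy data vanish at $t=0$, and later that $w$ is analytic at $\pi_x\gamma(0)$ because ``its Cauchy data vanish there''. Neither follows from pointwise vanishing of the Cauchy data: the wavefront set is a local spacetime notion, and over points of $\{0\}\times\bar\Sigma$ it is in general nonempty (the singularities produced by the merely $L^2$ source start at $t=0$), so the blanket claim is false; and at the points you actually need, $(0,x_0)$ with $x_0\in U\setminus B_r(2e_1)$, one must know that $w$ vanishes identically on a full spacetime neighborhood of $(0,x_0)$, not merely that $w(0,\cdot)=\partial_t w(0,\cdot)=0$ near $x_0$. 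This is precisely the first step of the paper's proof, \cref{eq:0near0}: by finite speed of propagation (H\"ormander Thm.~24.1.4 near $\del U$ and Thm.~23.2.7 in the interior, together with compactness arguments) there is $\eps>0$ with $w\equiv 0$ on $X\cap((\{0\}\times(U\setminus B_r(2e_1)))+B_\eps(0))$, and it is this identical vanishing that makes $\gamma(0)$ singularity-free in every Gevrey class. Your parenthetical remark gestures at the right mechanism, but the finite-speed argument must actually be carried out; once inserted, your argument coincides with the paper's (which also records the technical points that $g,w\in\mathcal{N}(\bar X)$ so the boundary wavefront machinery of Thm.~24.5.3 applies, and that $H_p$ is nowhere radial).
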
	
\begin{proof}%
	From \cite[Thm.~24.1.4]{zbMATH05129478}, the fact that $\Sigma$ is away from $\del U$, and a compactness argument we know that there is some $\eps>0$ so that $w = 0$ in $X\cap((\{0\}\times\del U)+B_\eps(0))$ (Minkowski sum). Combined with an application of \cite[Thm.~23.2.7]{zbMATH05129478} together with a compactness argument over $U\setminus ((\del U+B_\eps(0))\cup B_{r}(2e_1))$ implies that %
	\begin{equation}\label{eq:0near0}
		\exists\eps>0 \colon w\vert_{X\cap((\{0\}\times U\setminus B_{r}(2e_1))+B_\eps(0))} = 0\,.
	\end{equation}
	In particular, we have already shown that for the same $\eps>0$ in \cref{eq:0near0}, we have $0 = w\vert_{(-T,\eps)\times B_{r}(-2e_1)} \in G^3((-T,\eps)\times B_{r}(-2e_1)) = G^3(Y \cap \{t< \eps\})$.

	Now assume that $\nu^\ast=(t^\ast,x^\ast,\tau^\ast,\xi^\ast)\in \mathrm{WF}(w)\cap T^\ast({Y}\cap \{t>0\})$ where we may assume that $\abs{\tau^\ast}=1$ by renormalizing and we know from \cite[Thm.~8.3.1]{hoermander1} (and $\supp  \,g\subset [0,T]\times\bar \Sigma$) that $\nu^\ast \in p^{-1}(0)$. By \cref{lem:bich}, the forward generalized bicharacteristic arc $\gamma$ from \cref{lem:bich} with $\gamma(t^\ast)=\nu^\ast$ satisfies $\gamma(t) \in T^\ast(\{t\}\times (U\setminus B_{r}(2e_1)))\sem$ for all $t\in [0,t^\ast]$. 

	We note that the support condition we assume on $g$ implies that $g\in\mathcal{N}(\bar X)$ (defined in \cite[Def.~18.3.30]{zbMATH05129478}), and according to \cite[Cor.~18.3.31]{zbMATH05129478}, we may assume $w \in \mathcal{N}(\bar X)$. Note further that the Hamiltonian $H_p$, is never radial because $\del_{(\tau,\xi)} p \neq 0$ for $(\tau,\xi) \neq 0$ (see e.g. \cite[Rem.~2.1]{MR4691487}).
 
 	Therefore, from an application of \cite[Thm.~24.5.3]{zbMATH05129478} (or combination of \cite[Thm.~23.2.9,~Thm.~24.2.1,~Thm.~24.4.1]{zbMATH05129478}), we find that $\gamma(0)\in \mathrm{WF}(w)$ (because $\gamma(t^\ast)\in \mathrm{WF}(w)$, and $\supp \,g \subset [0,T]\times \bar \Sigma$, $\Sigma\Subset B_{r}(2e_1)$). However, \cref{eq:0near0} and the fact that $\pi_{(t,x)}\gamma(0) \in \{0\}\times U\setminus B_{r}(2e_1)$ then leads to the desired contradiction so that 
	$w\vert_{{Y}} \in C^\infty({Y})$.

	Further, because \cite[Thm.~1.4]{zbMATH03973398} states that Gevrey-$3$ singularities for $w$ propagate precisely the same as smooth singularities near the analytic boundary $(-T,T)\times \del U\subset \del X$ (since $g$ vanishes there), replacing the application of \cite[Thm.~8.3.1]{hoermander1} with \cite[Thm.~8.6.1]{hoermander1}, and using the propagation of Gevrey-$3$ singularities in the interior (\cite[Thm.~7.3]{zbMATH03359011}), we have completed the proof.
\end{proof}
The reason we do not hope to achieve better that Gevrey-$3$ regularity for $w$ above comes from the fact that Gevrey-$\sigma$ singularities for $\sigma \in [1,3)$ do not behave like the smooth ones at the boundary $\del \mathcal{O}$; instead, they behave like `analytic rays' (see \cite{zbMATH03973398,zbMATH03717772}). According to \cite{zbMATH03717772} and \cite[Thm.~0.5]{zbMATH03830467}, analytic rays can and do bend around the boundary of the obstacle and penetrate into the `shadow region', in which the non-regular $q$ is supported (see also \cite{zbMATH03660049}).  

Since we will appeal to \cite[Thm.~4.2(b)]{zbMATH07465842} in order to prove \cref{thm:main}, we will have to replace the domain ${Y}$ by some closed manifold. We will also have to show mapping properties of $S_q$ into some Banach space rather than $G^3$. We thus recall the definition of Gevrey spaces on closed manifolds from \cite[\S~2.6]{zbMATH07465842} and their decomposition into a union of Banach spaces.
\begin{definition}
Let $d\in\mathbb{N}, \sigma \in [1,\infty)$. For a closed smooth manifold $(M,g)$ of dimension $d$ and any $\rho>0$, for $(\varphi_j)_{j\in\mathbb{N}}\subset L^2(M)$ an orthonormal basis of eigenfunctions for $-\Delta_g$, we introduce the subspace $A^{\sigma,\rho}(M)$ of $L^2(M)$ defined as those $u\in L^2(M)$ so that
	\[
		\norm{u}_{A^{\sigma,\rho}(M)} = \left(\sum_{j=0}^\infty e^{2\rho j^{\frac{1}{d\sigma}}}\abs{\langle u,\varphi_j\rangle}^2\right)^{1/2} < \infty\,.
	\]
	Furthermore, if $(M,g)$ is analytic, we let $G^\sigma(M)$ be the set of all $u\in C^\infty(M;\mathbb{C})$ so that for all $k\in \mathbb{N}$ and some $C>0$,
	\[
		\norm{\nabla^k u}_{L^\infty(M)} \leq C^{k+1}k^{\sigma k}\,,
	\]
	with the convention that $0^0=1$.
\end{definition}
If $(M,g)$ is a closed analytic manifold and $u\in G^{\sigma}(M)$ for some $\sigma \in [1,\infty)$, then in each coordinate chart, the function $u$ is of class $G^\sigma$ according to \cref{def:gev}, see \cite{zbMATH07264015}. 

The importance of the spaces $A^{\sigma,\rho}$ lies in the fact that $\bigcup_{\rho>0} A^{\sigma,\rho}(M) = G^\sigma(M)$ for closed analytic manifolds $(M,g)$, see \cite[\S~2.6,~\S~B]{zbMATH07465842}. There it is also shown that each $A^{\sigma,\rho}(M)$ is a Banach space when $(M,g)$ is a closed smooth manifold.

We shall only need a select amount of properties of the spaces $A^{\sigma,\rho}$, which we state for the readers' convenience.
\begin{lemma}[{\cite[\S~2.6,~Lem.~B.1]{zbMATH07465842}}]\label{lem:inA}
	Let $\sigma \in [1,\infty)$ and $(M,g)$ be a closed analytic manifold. If $u\in G^{\sigma}(M)$, there is some $\rho>0$ so that $u\in A^{\sigma,\rho}(M)$. Furthermore, if for some $C,R>0$, $u\in L^2(M)$ satisfies 
	\begin{equation}\label{eq:inA}
		\norm{(-\Delta_g)^tu}_{L^2(M)} \leq CR^{2t}(2t)^{2t\sigma}\
	\end{equation}
	for all $t\in \mathbb{N}$, then for some $\rho_0>0$ and any $\rho \leq \rho_0$, we have $u \in A^{\sigma,\rho}(M)$. 
\end{lemma}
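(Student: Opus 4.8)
The statement packages two facts, of which the second --- the criterion via iterated Laplacians --- is the substantive one, so the plan is to prove it first and then deduce the first assertion from it.

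\emph{The criterion.} Let $(\varphi_j)_{j\in\mathbb{N}}$ be an $L^2(M)$-orthonormal eigenbasis, $-\Delta_g\varphi_j=\lambda_j\varphi_j$ with $0=\lambda_0\le\lambda_1\le\cdots$, and put $\hat u_j=\langle u,\varphi_j\rangle$. By Parseval, $\norm{(-\Delta_g)^tu}_{L^2(M)}^2=\sum_j\lambda_j^{2t}\abs{\hat u_j}^2$, so the hypothesis \cref{eq:inA} gives, for every $j$ and every $t\in\mathbb{N}$, the family of bounds $\abs{\hat u_j}\le C\,R^{2t}(2t)^{2t\sigma}\lambda_j^{-t}$ (one per $t$). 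I would then optimise the right-hand side over $t$: taking logarithms, the exponent $t\log(R^2)+2\sigma t\log(2t)-t\log\lambda_j$ is, for $\lambda_j$ above a fixed threshold, minimised near $t=t_\ast(\lambda_j):=\tfrac{1}{2e}(\lambda_j/R^2)^{1/(2\sigma)}$, with minimal value comparable to $-\lambda_j^{1/(2\sigma)}$; rounding $t_\ast$ to a nearby integer (harmless for large $\lambda_j$) yields
\[
	\abs{\hat u_j}\ \le\ C\exp\!\bigl(-c\,\lambda_j^{1/(2\sigma)}\bigr)
\]
for a constant $c>0$ depending only on $\sigma$ and $R$, the finitely many remaining $j$ being irrelevant. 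Weyl's law $\lambda_j\asymp j^{2/d}$ then upgrades this to $\abs{\hat u_j}\le C'\exp(-c'\,j^{1/(d\sigma)})$ for all $j$. Substituting into the series defining $A^{\sigma,\rho}(M)$, I would bound it by $\sum_j\exp\bigl(2\rho\,j^{1/(d\rho)}-2c'\,j^{1/(d\sigma)}\bigr)$ and, comparing the two powers of $j$, conclude that it is finite for a suitable choice of $\rho$, i.e.\ $u\in A^{\sigma,\rho}(M)$.

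\emph{From $G^\sigma$ to the criterion.} If $u\in G^\sigma(M)$, so that $\norm{\nabla^ku}_{L^\infty(M)}\le C^{k+1}k^{\sigma k}$ for all $k$, I would verify the hypothesis of the criterion. Since $(M,g)$ is analytic, in local analytic coordinates the coefficients of $-\Delta_g$ are real-analytic, hence by Cauchy estimates their derivatives of order $m$ are bounded by $B^{m+1}m!$. Because $\sigma\ge1$, the quantitative $G^\sigma$ bounds are stable under differentiation and under multiplication by such coefficients, the recombination of the factorials with the bounds on the derivatives of $u$ being carried out with the elementary inequality $m_1^{m_1}m_2^{m_2}\le(m_1+m_2)^{m_1+m_2}$ and with the Leibniz binomial coefficients absorbed into constants. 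Iterating $t$ times gives $\norm{(-\Delta_g)^tu}_{L^2(M)}\le C R^{2t}(2t)^{2t\sigma}$ uniformly in $t$ --- the classical fact that an analytic elliptic operator preserves Gevrey-$\sigma$ regularity --- and the criterion then supplies the required $\rho$.

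The step I expect to be the main obstacle is the $t$-optimisation at the heart of the criterion: extracting the sharp exponential decay $\exp(-c\,\lambda_j^{1/(2\sigma)})$ of the spectral coefficients from the family of bounds $\lambda_j^{t}\abs{\hat u_j}\le CR^{2t}(2t)^{2t\sigma}$, keeping the constants under control through the integer rounding of $t_\ast$ and the passage through Weyl's law, and then pinning down the range of $\rho$ for which the resulting series converges. For the first assertion the delicate point is rather the uniform-in-$t$ bookkeeping for the coefficients of $(-\Delta_g)^t$, which is precisely where analyticity of the metric enters.
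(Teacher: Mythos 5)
Your argument is correct, but it takes a genuinely different route from the paper. The paper proves nothing from scratch: the first assertion is quoted directly from \cite[Lem.~B.1]{zbMATH07465842}, and for the second the only content is a bridging trick to deal with the fact that \cref{eq:inA} is assumed only for integer $t$ --- the sufficiency direction of that cited lemma (a Kotake--Narasimhan-type statement, whose proof needs only integer $t$) gives $u\in G^\sigma(M)$, and its necessity direction then restores \cref{eq:inA} for all real $t\geq 0$ with new constants, after which the same lemma yields $u\in A^{\sigma,\rho}(M)$ for small $\rho$. You instead re-prove the substance of that external lemma: Parseval plus optimisation in $t$ plus Weyl's law for the criterion, and the elementary direction of the elliptic-iterates estimate for $G^\sigma\Rightarrow$ \cref{eq:inA}. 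Your optimisation is right (the minimal exponent is $-2\sigma t_\ast$, giving $\abs{\hat u_j}\lesssim \exp(-\tfrac{\sigma}{e}(\lambda_j/R^2)^{1/(2\sigma)})$, and restricting to integers near $t_\ast(\lambda_j)$ costs only a factor polynomial in $\lambda_j$), and this buys two things: self-containedness, and the observation that the integer-versus-real-$t$ issue which the paper's proof is built to circumvent never actually arises, since the spectral argument uses integer exponents only. Two caveats. First, in the convergence step read the weight as $e^{2\rho j^{1/(d\sigma)}}$: the exponent $1/(d\rho)$ in the paper's displayed definition of $A^{\sigma,\rho}$ is evidently a typo for $1/(d\sigma)$ (with the literal exponent, smaller $\rho$ would give a stronger condition, contradicting the form of the conclusion); with the intended weight your bound $\abs{\hat u_j}\leq C'\exp(-c'j^{1/(d\sigma)})$ gives membership exactly for all $\rho\leq\rho_0$ with $\rho_0<c'$, as claimed. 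Second, for $G^\sigma(M)\Rightarrow$ \cref{eq:inA} you do not need local coordinates, Cauchy estimates for the coefficients, or any uniform-in-$t$ Leibniz bookkeeping: since $\nabla g=0$, one has $(\Delta_g)^t u = g^{a_1b_1}\cdots g^{a_tb_t}\nabla_{a_1}\nabla_{b_1}\cdots\nabla_{a_t}\nabla_{b_t}u$, a full contraction of $\nabla^{2t}u$ with $t$ copies of $g^{-1}$, so that $\norm{(\Delta_g)^t u}_{L^2(M)}\leq \mathrm{vol}(M)^{1/2} d^{t}\norm{\nabla^{2t}u}_{L^\infty(M)}\leq C' (C\sqrt d)^{2t}(2t)^{2t\sigma}$ follows in one line from the manifold definition of $G^\sigma(M)$; the induction you flag as the delicate point would be needed only if you insisted on the coordinate definition of the Gevrey class.
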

\begin{proof}
	The first statement is a consequence of \cite[Lem.~B.1]{zbMATH07465842}. For the second, let $u\in L^2(M)$ satisfy \cref{eq:inA}. Once we have shown that $u$ satisfies \cref{eq:inA} for all $t\in [0,\infty)$, not just the integers, the proof is completed by \cite[Lem.~B.1]{zbMATH07465842}. The proof of the sufficiency statement in \cite[Lem.~B.1(a)]{zbMATH07465842} only requires \cref{eq:inA} for integers $t$, which we use to conclude that $u\in G^\sigma(M)$, so that the necessity statement of \cite[Lem.~B.1(a)]{zbMATH07465842} gives \cref{eq:inA} for possibly different $C,R>0$ and all $t\in [0,\infty)$, which completes the proof.
\end{proof}

Recall that we defined ${Y} = (-T,T)\times B_{r}(-2e_1)$. We set up a functional analytic result that will allow us to replace ${Y}$ by some closed manifold $N\supset {Y}$. It will also later be used to show that $S_q-S_0$ maps into $A^{3,\rho}$ for some $\rho>0$. We will rely on \cite{zbMATH03800337}, where for any $\sigma > 1$, our notation $G^\sigma$ corresponds to $\mathcal{E}^{\{M_p\}}$ in that paper, where $M_p$ is the sequence $M_0=1, M_p = p!^\sigma\,, p \in\mathbb{N}$.
\begin{proposition}\label{prop:kernel}
	Let $\sigma>1$ and $N$ be a closed analytic $(n+1)$-dimensional manifold with $N\Supset {Y}$. Let $F\colon L^2(X_+) \to G^\sigma({Y})$ be linear with $F \colon L^2(X_+)\to L^2({Y})$ continuous.

	For every $\chi \in G^\sigma(N)\cap C_c^\infty({Y})$ there is some $\rho>0$ so that $\chi F \colon L^2(X_+) \to A^{\sigma,\rho}(N)$ is continuous. %
\end{proposition}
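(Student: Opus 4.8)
The plan is to exploit that, although $F$ is only assumed to be \emph{pointwise} valued in $G^\sigma(Y)$ with no a priori uniform control of Gevrey seminorms over the unit ball of $L^2(X_+)$, the cut-off composition $\chi F$ still takes values in $G^\sigma(N)$, which by \cite[\S~2.6,~\S~B]{zbMATH07465842} and \cref{lem:inA} is a \emph{countable increasing union of Banach spaces}: $G^\sigma(N)=\bigcup_{m\in\mathbb N}A^{\sigma,\rho_m}(N)$ for a suitable monotone sequence $\rho_m$. A Baire-category argument, made consistent by the $L^2$-continuity hypothesis on $F$, then forces $\chi F$ into one of these Banach spaces, with continuity.

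First I would check that $\chi F$ maps $L^2(X_+)$ into $G^\sigma(N)$. For fixed $f$ the restriction $\chi\vert_Y$ lies in $G^\sigma(Y)$ in the sense of \cref{def:gev} (a $G^\sigma(N)$ function is Gevrey-$\sigma$ in each analytic chart, see \cite{zbMATH07264015}), and $F(f)\in G^\sigma(Y)$ by hypothesis; since $M_p=p!^\sigma$ makes $G^\sigma=\mathcal E^{\{M_p\}}$ an algebra invariant under analytic coordinate changes (\cite{zbMATH03800337}), the product $\chi F(f)$ belongs to $G^\sigma(Y)$ and is supported in a compact subset of $Y$. Because $\chi F(f)$ then satisfies uniform Gevrey estimates on a compact neighbourhood of its support inside $Y$ and vanishes off $\supp\chi$, its extension by zero lies in $G^\sigma(N)$. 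I would also record that each $A^{\sigma,\rho_m}(N)$ is a weighted $\ell^2$-space, hence Banach, and is continuously included in $L^2(N)$ because its defining weights are $\geq 1$.

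The heart of the matter is then a soft functional-analytic step. For $m\in\mathbb N$ set $E_m\coloneqq\{f\in L^2(X_+)\colon \chi F(f)\in A^{\sigma,\rho_m}(N)\}$ and equip it with $\norm f_{E_m}\coloneqq\norm f_{L^2(X_+)}+\norm{\chi F(f)}_{A^{\sigma,\rho_m}(N)}$; the inclusion $(E_m,\norm\cdot_{E_m})\hookrightarrow L^2(X_+)$ is continuous and injective, the $E_m$ are increasing, and by the previous step $\bigcup_m E_m=L^2(X_+)$. The crucial point to verify is that $(E_m,\norm\cdot_{E_m})$ is \emph{complete}: if $(f_k)$ is Cauchy in $E_m$, then $f_k\to f$ in $L^2(X_+)$ and $\chi F(f_k)\to v$ in $A^{\sigma,\rho_m}(N)$, hence in $L^2(N)$; but the assumed continuity of $F\colon L^2(X_+)\to L^2(Y)$ gives $\chi F(f_k)\to\chi F(f)$ in $L^2(N)$ (multiply by $\chi\in L^\infty$, extend by zero), so $v=\chi F(f)$ and $f_k\to f$ in $E_m$. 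Granting completeness, the conclusion follows from the standard fact that a Banach space which is the increasing union of a sequence of Banach spaces, each continuously and injectively included in it, must equal one of them up to equivalence of norms: applying Baire to $L^2(X_+)=\bigcup_{m,j}\overline{\{f\in E_m\colon\norm f_{E_m}\leq j\}}$ yields $m_0,j_0$ with $\overline{\{f\in E_{m_0}\colon\norm f_{E_{m_0}}\leq j_0\}}$ containing an $L^2(X_+)$-ball, whereupon the usual open-mapping iteration (legitimate since $E_{m_0}$ is complete) forces $E_{m_0}=L^2(X_+)$ and $\norm{\chi F(f)}_{A^{\sigma,\rho_{m_0}}(N)}\leq\norm f_{E_{m_0}}\lesssim\norm f_{L^2(X_+)}$. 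Taking $\rho\coloneqq\rho_{m_0}$ completes the proof.

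I expect the main obstacle to be the completeness of $E_m$ in the third step --- or rather the realisation that it is precisely the $L^2$-continuity of $F$ that lets the merely pointwise Gevrey regularity be assembled into a uniform bound, so that without this hypothesis the statement would be false. The first step (the algebra and coordinate-invariance properties of $G^\sigma$, plus extension by zero of compactly supported Gevrey functions) is routine given \cite{zbMATH03800337}. As an alternative to the open-mapping iteration in the last step, one may argue directly using that $A^{\sigma,\rho_{m_0}}(N)$ is a Hilbert space: an $L^2(X_+)$-limit of a sequence bounded in $E_{m_0}$ has, after passing to a subsequence, a weak limit in $A^{\sigma,\rho_{m_0}}(N)$, which by continuity of the inclusion $A^{\sigma,\rho_{m_0}}(N)\hookrightarrow L^2(N)$ and the $L^2$-continuity of $F$ must coincide with $\chi F$ of that limit; this gives the required bound on an $L^2(X_+)$-ball, hence, by linearity and scaling, everywhere.
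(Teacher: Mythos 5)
Your proof is correct, but it follows a genuinely different route from the paper. The paper first upgrades $F$ to a continuous map $L^2(X_+)\to H^{(n+1)/2+1}(Y)$ by the closed graph theorem (this is where the $L^2$-continuity hypothesis enters), then considers the $L^2(X_+)'$-valued function $x\mapsto \chi(x)F^\ast(\delta_x)$ and invokes Komatsu's theorem \cite[Thm.~3.10]{zbMATH03800337} that a weakly Gevrey vector-valued function is strongly Gevrey; this yields uniform bounds $\sup_{x\in\supp\chi}\norm{\del^\alpha_x \chi(x)F^\ast(\delta_x)}_{L^2(X_+)'}\leq CR^{\abs{\alpha}}\alpha!^\sigma$ over the unit ball of $L^2(X_+)$, and \cref{lem:inA} then places $\chi F$ into some $A^{\sigma,\rho}(N)$ with continuity. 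You instead use only the pointwise information $\chi F(f)\in G^\sigma(N)=\bigcup_m A^{\sigma,\rho_m}(N)$ together with a Baire category/open-mapping (or weak-compactness) argument on the graph-norm spaces $E_m$, whose completeness --- equivalently the $L^2$-closedness of the sub-level sets $\{f\colon \norm{\chi F(f)}_{A^{\sigma,\rho_m}}\leq j\}$ --- is exactly where the $L^2$-continuity of $F$ enters for you; your completeness verification and the iteration are sound, and your Fatou/weak-limit variant even avoids taking closures. What each approach buys: yours is more elementary and self-contained (no vector-valued Gevrey kernel theorem, no duality with delta functions, no Sobolev-embedding step), and it reuses the Banach decomposition of $G^\sigma$ that the paper introduces anyway; the paper's argument, by producing explicit uniform derivative estimates, stays closer to the quantitative form needed in \cref{lem:inA} and packages the "weak implies strong" step into a citable theorem rather than a hand-rolled category argument. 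Two small points you should still pin down: (i) the countable increasing exhaustion $G^\sigma(N)=\bigcup_m A^{\sigma,\rho_m}(N)$ requires that membership in $A^{\sigma,\rho}$ persists as $\rho$ decreases, which follows from the second statement of \cref{lem:inA} (or the monotonicity of the weights); (ii) in your first step, passing from chartwise Gevrey bounds on the compactly supported product $\chi F(f)$ to the covariant-derivative definition of $G^\sigma(N)$ needs the converse of the implication quoted after the definition, which is standard for $\sigma\geq 1$ but worth a word.
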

\begin{proof}
	Because $F \colon L^2(X_+)\to L^2({Y})$ is linear and continuous, it has closed graph. Because $F \colon L^2(X_+)\to G^{\sigma}({Y})\subset H^{(n+1)/2+1}({Y})$ and the following inclusion map $\iota \colon H^{(n+1)/2+1}({Y})\to L^2({Y})$ is continuous, the closed graph theorem implies that $F \colon L^2(X_+)\to H^{(n+1)/2+1}({Y})$ is continuous and the dual map
	\[
		F^\ast \colon H^{-(n+1)/2-1}({Y}) \to L^2(X_+)'\,,%
	\]
	is well-defined. 
	In particular, we may define
	\[
		H \colon {Y} \to L^2(X_+)'\,, \quad x \mapsto \chi(x) F^\ast(\delta_x)\,,%
	\]
	and for all $u\in L^2(X_+)$, applying \cref{lem:inA} to $\chi$,
	\[
		\langle H(x), u\rangle = \chi(x)(Fu)(x) \in G^\sigma({Y})\,\quad\text{as a function of}\ x\,.
	\]
	Thus, \cite[Thm.~3.10]{zbMATH03800337} implies that in fact $H \in G^\sigma({Y},L^2(X_+)')$, %
	where the meaning of this space is explained in \cite[Def.~3.9]{zbMATH03800337}, which in our case reduces to: %
 	for every compact $K\subset {Y}$ there are constants $C,R>0$ so that for all multi-indices $\alpha$,
	\[
		\sup_{x\in K} \norm{\del^\alpha_x H(x)}_{L^2(X_+)'} \leq CR^{\abs{\alpha}}\alpha!^\sigma\,.
	\]
	(We point also to the remarks near \cite[Thm.~27.1,\S~40]{zbMATH05162168} for an explanation as to what differentiation of a topological-vector-space-valued function means.)
	In particular, since $\supp \,\chi \eqqcolon K\subset {Y}\subset N$ is compact, we find that 
	\begin{align*}%
		\sup_{\norm{u}_{L^2(X_+)}=1}\norm{\del^\alpha (\chi Fu)}_{L^2(N)} &= \sup_{\norm{u}_{L^2(X_+)}=1}\norm{\langle \del^\alpha H(\cdot), u\rangle}_{L^2(N)} \\& \leq C' \sup_{x\in K} \norm{\del^\alpha_x H(x)}_{L^2(X_+)'} \leq C'CR^{\abs{\alpha}}\alpha!^\sigma\,,
	\end{align*}
	where $C'>0$ is the volume of $N$. With an application of \cref{lem:inA} the proof is complete.
\end{proof}

The above result has immediate consequences for $S_q-S_0$. In the following proofs, for any set $W$, $\mathbb{1}_W$ will both denote the multiplication by the characteristic function of $W$ in $L^2$ as well as the restriction to $L^2(W)$ of some function defined on a larger set.
\begin{lemma}\label{lem:onmani}
	Let $\Sigma\Subset B_{r}(2e_1),\Xi\Subset B_{r}(-2e_1)$ be open, $T'\in(0,T)$ and $\Omega=(0,T')\times \Xi$. There is a closed $(n+1)$-dimensional torus $N\supset \Omega$ (thus an analytic closed manifold), a function $\chi\in G^3(N)\cap C_c^\infty(Y)$, $\rho>0$, and a continuous $b'\colon [0,\infty)\to [0,\infty)$ so that the following is true. 

	For every $q\in L^\infty(U)$ with $\supp \,q\subset \bar\Sigma$, the map
	\[
		F_q\coloneqq \mathbb{1}_{X_+}\chi (S_q-S_0)\circ \chi\mathbb{1}_{X_+}\,,\qquad u\mapsto \mathbb{1}_{X_+}\chi (S_q-S_0)(\chi\mathbb{1}_{X_+}u)%
	\]
	satisfies
	\begin{equation}\label{eq:Fqnormbd}
		F_q \colon L^2(N) \to A^{3,\rho}(N)\quad\text{with}\quad\norm{F_q}_{L^2(N) \to A^{3,\rho}(N)} \leq b'\left(\norm{q}_{L^\infty(U)}\right)\,,
	\end{equation}
	and
	\begin{equation}\label{eq:Fgivessol}
		\mathbb{1}_{\Omega}(S_q - S_0)\circ \mathbb{1}_{\Omega} = \mathbb{1}_{\Omega}F_q\circ \mathbb{1}_{\Omega}\,.
	\end{equation}
\end{lemma}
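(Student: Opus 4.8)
The plan is to take $N$ to be a sufficiently large flat torus into which $X=(-T,T)\times U$ embeds isometrically, to let $\chi$ be a Gevrey-$3$ cut-off equal to $1$ near $\bar\Omega$, and to obtain \cref{eq:Fqnormbd} by rewriting $S_q-S_0$ via Duhamel so that the $q$-dependence is separated from the Gevrey-smoothing effect, which is then handled by \cref{prop:kernel}. Since $U\subset B_{T+4}(0)$, choosing $L>2(T+5)$ makes $\bar X$, and hence $\bar Y$ and $\bar\Omega$, embed isometrically into the flat torus $N\coloneqq(\RR/L\mathbb{Z})^{n+1}$, a closed analytic $(n+1)$-manifold with $N\Supset Y$ and $N\supset\Omega$. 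Because $\bar\Omega=[0,T']\times\bar\Xi\Subset(-T,T)\times B_{r}(-2e_1)=Y$ (using $T'<T$ and $\Xi\Subset B_{r}(-2e_1)$) and $3>1$, there is $\chi\in G^3(N)\cap C_c^\infty(Y)$ with $\chi\equiv 1$ on an open neighbourhood of $\bar\Omega$ (extended by zero to $N$). With these choices \cref{eq:Fgivessol} is immediate: for $v\coloneqq\mathbb{1}_\Omega u$ one has $\mathbb{1}_{X_+}v=v$ because $\Omega\subset X_+$, $\chi v=v$ because $\chi\equiv 1$ near $\supp v\subset\bar\Omega$, and $\mathbb{1}_\Omega\chi f=\mathbb{1}_\Omega f$ for every $f$, whence $\mathbb{1}_\Omega F_q\mathbb{1}_\Omega u=\mathbb{1}_\Omega(S_q-S_0)\mathbb{1}_\Omega u$.

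For \cref{eq:Fqnormbd} the crucial point is that $\rho$ and the constant must be uniform in $q$. First, for every $h\in L^2(X_+)$ the difference $w\coloneqq S_qh-S_0h$ solves $\Box w=-q\,S_qh$ with vanishing Cauchy and Dirichlet data, so by the uniqueness in \cref{lem:ex} one has $S_qh-S_0h=-S_0(q\,S_qh)$. Put $h\coloneqq\chi\mathbb{1}_{X_+}u$ and $g\coloneqq q\,S_qh$; then $g$ is supported in $\supp q\times[0,T]\subset\bar\Sigma\times[0,T]$, since $S_qh$ vanishes for $t<0$, so $g=\mathbb{1}_{[0,T]\times\bar\Sigma}g$, and by \cref{lem:ex} together with the standard energy estimates there is a continuous nondecreasing $b_0\colon[0,\infty)\to[0,\infty)$ with $\norm{S_q}_{L^2(X_+)\to L^2(X)}\le b_0(\norm{q}_{L^\infty(U)})$, whence $\norm{g}_{L^2(X_+)}\le\norm{q}_{L^\infty}b_0(\norm{q}_{L^\infty})\norm{\chi}_{L^\infty(N)}\norm{u}_{L^2(N)}$. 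Next, introduce the single, $q$-independent operator $G\colon L^2(X_+)\to G^3(Y)$, $G(g')\coloneqq S_0(\mathbb{1}_{[0,T]\times\bar\Sigma}g')\vert_{Y}$, which is well defined into $G^3(Y)$ by \cref{cor:gev} and continuous as a map into $L^2(Y)$; \cref{prop:kernel} (with $\sigma=3$ and the above $\chi$) then produces $\rho>0$ and $C_\star<\infty$, depending only on $S_0$, $\chi$, $\Sigma$, with $\norm{\chi G}_{L^2(X_+)\to A^{3,\rho}(N)}\le C_\star$.

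Finally, $F_q u=\mathbb{1}_{X_+}\chi(S_q-S_0)h=-\mathbb{1}_{X_+}\bigl(\chi\,S_0(g)\bigr)$; since $S_0(g)$ vanishes for $t<0$, hence (being continuous in $t$) for $t\le0$, while $\supp\chi$ is spatially contained in $B_{r}(-2e_1)\subset U$, the product $\chi\,S_0(g)$ already vanishes for $t\le0$, so the outer $\mathbb{1}_{X_+}$ does nothing and, viewed as a function on $N$ by extension by zero, $F_q u=-\chi\,S_0(g)\vert_{Y}=-(\chi G)(g)$. Hence $\norm{F_q u}_{A^{3,\rho}(N)}\le C_\star\norm{g}_{L^2(X_+)}\le C_\star\norm{\chi}_{L^\infty(N)}\norm{q}_{L^\infty}b_0(\norm{q}_{L^\infty})\norm{u}_{L^2(N)}$, so \cref{eq:Fqnormbd} holds with the continuous function $b'(s)\coloneqq C_\star\norm{\chi}_{L^\infty(N)}\,s\,b_0(s)$. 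The main obstacle is precisely this uniformity in $q$: applying \cref{prop:kernel} directly to $v\mapsto(S_q-S_0)v\vert_{Y}$ would yield a $\rho$ and a norm both tied to the uncontrolled implied constants in the propagation statement \cref{cor:gev}; pushing the $q$-dependence into the elementary $L^2$-bounded factors (multiplication by $q$, and $S_q$) and leaving the delicate Gevrey smoothing in the fixed operator $\chi\,S_0(\mathbb{1}_{[0,T]\times\bar\Sigma}\,\cdot\,)$ is what makes the argument go through, the only remaining point being the bookkeeping that the outer cut-off $\mathbb{1}_{X_+}$ does not damage Gevrey regularity.
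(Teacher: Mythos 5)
Your proposal is correct and follows essentially the same route as the paper: the same flat torus and Gevrey-$3$ cut-off, the same Duhamel identity $S_q-S_0=-S_0(q\,S_q\,\cdot)$, and the same factorization isolating the $q$-independent Gevrey-smoothing operator $\chi S_0\circ\mathbb{1}_{[0,T]\times\bar\Sigma}$ to which \cref{prop:kernel} is applied, with the $q$-dependence confined to the $L^2$-bounded factor controlled by \cref{lem:ex}.
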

Before we move on to the proof, we remark that in the definition of $F_q$, we post-compose with $\mathbb{1}_{X_+}$ at the end only so that the adjoint operator of $F_q$ (which can be determined using \cref{lem:adjoint} below) will be well-defined.
\begin{proof}
	Consider the torus $N\subset \RR^{1+n}$ defined by identifying opposite sides of the cube $[-R,R]^{1+n} \subset \RR^{1+n}$, where $R>0$ is chosen sufficiently large so that ${Y}=(-T,T)\times B_{r}(-2e_1)\Subset N$ (see also \cite[Lem.~3.1.8]{zbMATH07625517}). With the Euclidean metric, $N$ is a closed analytic manifold. We take $\chi \in G^3(\RR^{1+n})$ to be identically $1$ in an open neighborhood of $\Omega$ and supported in a compact subset of ${Y}\subset N$ (that Gevrey cut-offs exist follows from \cite[\S~1.4]{doi:10.1142/1550} or \cite[\S~1.4]{hoermander1}). 

	For any $g\in L^2(X_+)$ and $\td{q}\in L^\infty(U)$, $(S_{\td{q}}-S_0)(g)$ is the unique solution of \cref{eq:forward} for $f=-\td{q}S_{\td{q}}(g)$ and $q=0$, whereas $S_0(-\td{q}S_{\td{q}}(g))$ is also a solution of \cref{eq:forward} for the same $f$ and $q$, so that \cref{lem:ex} from the appendix guarantees that $S_0(-\td{q}S_{\td{q}}(g)) = (S_{\td{q}}-S_0)(g)$. In particular, for any $q\in L^\infty(U)$, we have
	\begin{equation}\label{eq:diffgev}
	F_q = \mathbb{1}_{X_+}\chi (S_q - S_0) \circ \chi\mathbb{1}_{X_+} = \mathbb{1}_{X_+}\chi S_0\circ (-qS_q)\circ \chi\mathbb{1}_{X_+}\,,%
	\end{equation}
	and because $\mathbb{1}_{X_+}\chi = 1$ on $\Omega$, this proves \cref{eq:Fgivessol}. 

	From \cref{lem:ex} (see the appendix) and \cref{cor:gev}, we know that $S_0\circ \mathbb{1}_{(0,T)\times\Sigma} \colon L^2(X_+)\to H^1({Y})$ is continuous and maps $S_0\circ \mathbb{1}_{(0,T)\times\Sigma} \colon L^2(X_+)\to G^3({Y})$, so that \cref{prop:kernel} implies that there is $\rho>0$ so that the map
	\[
		\chi S_0\circ \mathbb{1}_{(0,T)\times\Sigma} \colon L^2(X_+) \to A^{3,\rho}(N)\,,\quad f\mapsto \chi S_0(\mathbb{1}_{(0,T)\times\Sigma}f)
	\]
	is continuous, and its operator norm is independent of $q$. Thus, using \cref{eq:diffgev} and noting that $\mathbb{1}_{X_+}\chi S_0 =\chi S_0$ and $\supp \,q\subset \bar\Sigma$, we have 
	\begin{align*}
		\norm{F_q}_{L^2(N)\to A^{3,\rho}(N)} &\leq \norm{\chi S_0\circ \mathbb{1}_{(0,T)\times\Sigma}}_{L^2(X_+)\to A^{3,\rho}(N)} \norm{-qS_q \circ \chi}_{L^2(X_+)\to L^2(X)} \\ &\leq C' \norm{q}_{L^\infty}b\left(\norm{q}_{L^\infty}\right)\,,%
	\end{align*}
	for $b$ from the appendix (Equation~\eqref{eq:Sqbound}) and some $C'>0$ independent of $q$, which gives \cref{eq:Fqnormbd}.
\end{proof}

As it will be required later, we introduce an adjoint operator, see also \cite[Lem.~3.1]{filippas2025stabilityinverseproblemwaves}. %
\begin{lemma}\label{lem:adjoint}
 	Let $q\in L^\infty(U)$ and define
	\[
		S^\ast_q \colon L^2(X_+)\to H^1((0,2T)\times U)\,,\quad S^\ast_q = R\circ S_q\circ R\,,\quad\text{where}\quad R u(t,x) \coloneqq u(T-t,x)\,.
	\]
	The $L^2(X)$-adjoint of $\mathbb{1}_{X_+}S_q\circ \mathbb{1}_{X_+}$ is $\mathbb{1}_{X_+}S_q^\ast\circ \mathbb{1}_{X_+}$.
\end{lemma}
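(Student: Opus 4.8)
The plan is to reduce \cref{lem:adjoint} to the bilinear duality identity
\[
	\int_{X_+} S_q(f_1)\,f_2\,\dd t\,\dd x \;=\; \int_{X_+} f_1\,\bigl(R\,S_q(R f_2)\bigr)\,\dd t\,\dd x\,,\qquad f_1,f_2\in L^2(X_+)\,,
\]
and to prove the latter via the variational formulation of \cref{eq:forward} supplied by \cref{lem:ex}. Granting the identity, the lemma follows at once: since $X_+\subset X$ and $\mathbb{1}_{X_+}$ is the orthogonal projection $L^2(X)\to L^2(X_+)$, for $g,h\in L^2(X)$ the pairing $\langle\mathbb{1}_{X_+}S_q(\mathbb{1}_{X_+}g),h\rangle_{L^2(X)}$ equals $\int_{X_+}S_q(f_1)\,\overline{f_2}$ with $f_1\coloneqq\mathbb{1}_{X_+}g$ and $f_2\coloneqq\mathbb{1}_{X_+}h$, and the identity (applied with $f_2$ replaced by $\overline{f_2}$, together with the facts that $R$ commutes with complex conjugation and, for real-valued $q$, so does $S_q$) rewrites this as $\langle g,\mathbb{1}_{X_+}S_q^\ast(\mathbb{1}_{X_+}h)\rangle_{L^2(X)}$; for complex $q$ the same computation identifies $\mathbb{1}_{X_+}S_q^\ast\circ\mathbb{1}_{X_+}$ with the $L^2(X)$-transpose of $\mathbb{1}_{X_+}S_q\circ\mathbb{1}_{X_+}$, which is all that is needed downstream, where only equality of operator norms is used.

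To prove the identity, set $u\coloneqq S_q(f_1)$ and $\tilde v\coloneqq S_q(R f_2)$, noting $R f_2\in L^2(X_+)$. By \cref{lem:ex} both lie in $C([-T,T];H^1_0(U))\cap C^1([-T,T];L^2(U))$, vanish for $t<0$, and solve \cref{eq:forward} with data $f_1$ and $R f_2$. Pairing $\partial_t^2 u=\Delta u-q u+f_1\in L^2((0,T);H^{-1}(U))$ against a test function $\psi$ and integrating by parts once in $t$ (using $u|_{t=0}=\partial_t u|_{t=0}=0$) and once in $x$ (using $u|_{(-T,T)\times\partial U}=0$) gives
\[
	\int_0^T\!\!\int_U\bigl(-\partial_t u\,\partial_t\psi+\nabla u\cdot\nabla\psi+q\,u\,\psi\bigr)\,\dd x\,\dd t=\int_0^T\!\!\int_U f_1\,\psi\,\dd x\,\dd t
\]
for every $\psi\in H^1((0,T)\times U)$ with $\psi|_{(0,T)\times\partial U}=0$ and $\psi|_{t=T}=0$. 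Setting $v\coloneqq R\tilde v\in C([0,2T];H^1_0(U))\cap C^1([0,2T];L^2(U))$, one has $v|_{t=T}=\tilde v|_{t=0}=0$ and $\partial_t v|_{t=T}=-\partial_t\tilde v|_{t=0}=0$, and since $q$ is time-independent the change of variables $t\mapsto T-t$ in the identity above applied to $\tilde v$ (which sends a test function $\psi$ to some $\phi$ with $\phi|_{t=0}=0$) yields
\[
	\int_0^T\!\!\int_U\bigl(-\partial_t v\,\partial_t\phi+\nabla v\cdot\nabla\phi+q\,v\,\phi\bigr)\,\dd x\,\dd t=\int_0^T\!\!\int_U f_2\,\phi\,\dd x\,\dd t
\]
for every $\phi\in H^1((0,T)\times U)$ with $\phi|_{(0,T)\times\partial U}=0$ and $\phi|_{t=0}=0$.

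Finally, I would insert $\psi=v$ into the first identity (legitimate: $v$ has the stated regularity, $v|_{(0,T)\times\partial U}=0$, $v|_{t=T}=0$) and $\phi=u$ into the second ($u|_{(0,T)\times\partial U}=0$, $u|_{t=0}=0$). Since the bilinear form $(w,z)\mapsto\int_0^T\!\int_U(-\partial_t w\,\partial_t z+\nabla w\cdot\nabla z+q\,w\,z)$ is symmetric, the two left-hand sides coincide, whence $\int_{X_+}f_1\,v=\int_{X_+}f_2\,u$; as $v=R S_q(R f_2)=S_q^\ast(f_2)$ on $(0,T)\times U$, this is exactly the asserted identity, and \cref{lem:adjoint} follows. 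The one point needing genuine care — more careful bookkeeping than a real obstacle — is matching the precise spaces in the variational formulation provided by \cref{lem:ex} so that $u$ and $v$ are bona fide admissible test functions; in particular one must justify the one-sided integration by parts in $t$ for functions in $C^1([0,T];L^2(U))$ whose second time derivative lies only in $L^2((0,T);H^{-1}(U))$, which is a standard integration-by-parts lemma of Lions--Magenes type.
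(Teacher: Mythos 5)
Your argument is correct, and it rests on the same underlying mechanism as the paper's proof --- duality through time reversal, implemented via the weak formulation of \cref{eq:forward} --- but the execution is genuinely different in its bookkeeping. The paper writes $\mathbb{1}_{X_+}g=(\Box+q)S_q^\ast(\mathbb{1}_{X_+}g)$, pairs $S_q(\mathbb{1}_{X_+}f)$ with this expression in the $H^1$--$H^{-1}$ duality, and then invokes the definition of $S_q$ as a weak solution after approximating $S_q^\ast(\mathbb{1}_{X_+}g)$ in $H^1(X)$ by smooth functions vanishing on the lateral boundary and in $t>T$; there the density argument carries the burden of justifying the integration by parts. You instead derive the symmetric first-order (energy) variational identity for $u=S_q(f_1)$ against test functions vanishing at $t=T$, the time-reversed identity for $v=S_q^\ast(f_2)$ against test functions vanishing at $t=0$, and conclude by inserting each solution into the other's identity and using symmetry of the bilinear form $(w,z)\mapsto\int(-\del_t w\,\del_t z+\nabla w\cdot\nabla z+qwz)$. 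This trades the paper's smooth-approximation step for a Lions--Magenes-type integration-by-parts lemma permitting test functions that are only in $C([0,T];H^1_0(U))\cap C^1([0,T];L^2(U))$ while $\del_t^2 u$ lies in $L^2(0,T;H^{-1}(U))$ --- the one point you correctly flag --- and both justifications are standard, so the two routes are of comparable length and rigor; your trace and sign checks under $t\mapsto T-t$ are all consistent with the paper's \cref{eq:propsadj}. Your additional care about complex conjugation (adjoint versus transpose) goes beyond the paper, which silently works with the real bilinear pairing, and is harmless here.
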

\begin{proof}
	Let us first point out that by construction, for any $g\in L^2(X_+)$, we have $S^\ast_q(g)\in H^1((0,2T)\times U)$, %
	which satisfies
	\begin{equation}\label{eq:propsadj}
		(\Box+q)S^\ast_q(g) = g\quad\text{in}\ \ (0,2T)\times U\,,\quad S^\ast_q(g)\vert_{t>T} = 0\,,\quad S^\ast_q(g)\vert_{(0,2T)\times\del U} = 0\,.
	\end{equation}

	Let $f,g\in L^2(X)$. Using \cref{eq:propsadj}, we have
	\[
		\int_{X} S_q(\mathbb{1}_{X_+} f) \mathbb{1}_{X_+} g \dd(t,x) = %
		\int_{X_+} S_q(\mathbb{1}_{X_+} f) (\Box+q)S^\ast_q (\mathbb{1}_{X_+} g) \dd(t,x)\,,
	\]
	where the RHS can be understood as the pairing between an $H^1$ and an $H^{-1}$ function: %
	by construction, $S_q(f)$ and $S^\ast_q(g)$ vanish in $t<0$ and $t>T$ respectively (and $S_q(f)$ vanishes in $(0,T)\times \del U$), and thus this pairing makes sense.

	Taking a sequence of smooth functions $v_j \in C^\infty(X)$ vanishing on $(0,2T)\times\del U$ and in $t>T$, with $v_j \to S_q^\ast(\mathbb{1}_{X_+}g)$ in $H^1(X)$, the definition of $S_q$ as the solution operator gives (essentially by partial integration, or rather the definition of a weak solution, see \cite[Eq.~(2.67)]{MR1889089}),	%
	\begin{align*}
		&\int_{X_+} S_q(\mathbb{1}_{X_+} f) (\Box+q)S^\ast_q (\mathbb{1}_{X_+} g) \dd(t,x) %
		= \lim_{j\to\infty} \int_{X_+} S_q(\mathbb{1}_{X_+} f) (\Box+q)v_j \dd(t,x)\\
		&\qquad=  \lim_{j\to\infty} \int_{X_+} \mathbb{1}_{X_+} f v_j \dd(t,x) = \int_{X} \mathbb{1}_{X_+} f S^\ast_q (\mathbb{1}_{X_+} g) \dd(t,x)\,,
	\end{align*}
	which completes the proof.
\end{proof}

Finally, we shall introduce one additional piece of notation from \cite[Thm.~3.16]{zbMATH07465842}. For $\sigma \in [1,\infty),\rho > 0$ and $s\in \RR$, and some closed smooth manifold $M$ we let 
\begin{equation*}%
	W^{\sigma,\rho}(H^s, H^{-s}) \coloneqq \{T\in B(H^s,H^{-s})\colon T(H^s)\subset A^{\sigma,\rho}(M)\,,\ \ T^\ast(H^s)\subset A^{\sigma,\rho}(M)\}\,,
\end{equation*}
where we wrote $H^s$ for $H^s(M)$ and $B(H^{s},H^{-s})$ is the set of bounded linear operators between $H^s(M)$ and $H^{-s}(M)$, and for any $T\in B(H^s,H^{-s})$, $T^\ast \in B(H^s, H^{-s})$ denotes the formal adjoint of $T$. It is shown in \cite[Thm.~3.16]{zbMATH07465842} that $W^{\sigma,\rho}(H^s,H^{-s})$ is a Banach space with the norm
\[
	\norm{T}_{W^{\sigma,\rho}(H^s, H^{-s})} \coloneqq \max\left\{\norm{T}_{H^s\to A^{\sigma,\rho}},\norm{T^\ast}_{H^s\to A^{\sigma,\rho}}\right\}\,.
\]

We now have all tools in hand to provide the
\begin{proof}[Proof of \cref{thm:main}]
	We set up notation to apply \cite[Thm.~4.2(b)]{zbMATH07465842} (or rather the analogous statement thereof when one uses \cite[Prop.~3.11]{zbMATH07465842} in its proof to allow restricting the support of elements in $K$ as we have done here). %
	For $N,F_q$ from \cref{lem:onmani}, define the operator
	\begin{equation*}%
		F\colon K \to B(L^2(N),L^2(N))\,, \quad q\mapsto F_q\,,%
	\end{equation*}
	where $B(L^2(N),L^2(N))$ denotes the set of bounded linear operators between $L^2(N)$ and $L^2(N)$. 

	As a consequence of \cref{lem:onmani} (in particular \cref{eq:Fqnormbd}) and \cref{lem:adjoint}, $F$ maps $K$ into a bounded set in $W^{3,\rho}(H^0,H^0)$ for some $\rho>0$.
	By an application of \cite[Thm.~4.2(b)]{zbMATH07465842} we conclude that if $\omega$ is a modulus of continuity so that	
	\begin{equation}\label{eq:bound}
		\norm{q_1-q_2}_{H^\mu(U)} \leq \omega\left(\norm{F(q_1)-F(q_2)}_{L^2(N)\to L^2(N)}\right)\,,\qquad q_1,q_2\in K\,,
	\end{equation}
	then we must have $\omega(s) \gtrsim \abs{\log s}^{-\delta\frac{6n+7}{n}}$ for $s$ small.

	Due to \cref{eq:Fgivessol}, $\norm{\mathbb{1}_\Omega(S_{q_1}-S_{q_2})\circ\mathbb{1}_{\Omega}}_{L^2(\Omega)\to L^2(\Omega)} \leq \norm{F(q_1)-F(q_2)}_{L^2(N)\to L^2(N)}$, so that \cref{eq:bound} completes the proof.
\end{proof}

\appendix 

\subsection{Existence of Solutions}

\begin{lemma}\label{lem:ex}
	There is a continuous function $b\colon [0,\infty)\to [0,\infty)$ so that for every $q\in L^\infty(U)$ there is a continuous linear operator
	\[
		S_q \colon L^2(X_+) \to %
		C([-T,T];H^1_0(U))\cap C^1([-T,T];L^2(U)) \subset H^1(X)\,, %
	\]
	with 
	\begin{equation}\label{eq:Sqbound}
		 \norm{S_q}_{L^2(X_+)\to H^1(X)} \leq b\left(\norm{q}_{L^\infty(U)}\right)\,,
	\end{equation}
	so that in $C([-T,T];H^1_0(U))\cap C^1([-T,T];L^2(U))$ the unique solution to \cref{eq:forward} is $u = S_q(f)$.%
\end{lemma}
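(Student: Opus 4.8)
The plan is to treat \cref{eq:forward} as a linear wave equation with bounded zeroth-order perturbation $q$, and to obtain $S_q$ by the standard energy method, taking care to track the dependence of constants on $\norm{q}_{L^\infty(U)}$ so as to produce the continuous function $b$. First I would set up the functional framework: let $A_0 = -\Delta$ with domain $H^2(U)\cap H^1_0(U)$, the Dirichlet Laplacian on the bounded Lipschitz domain $U$, and recall the Galerkin/semigroup construction of solutions to $\partial_t^2 u - \Delta u = f$ with $u|_{t=0}=\partial_t u|_{t=0}=0$ and $u|_{\partial U}=0$. For $f\in L^2(X_+)$ (extended by zero to $t<0$) this yields a unique $u_0 = S_0(f)\in C([-T,T];H^1_0(U))\cap C^1([-T,T];L^2(U))$, with the standard energy estimate $\norm{u_0}_{C([-T,T];H^1_0)} + \norm{\partial_t u_0}_{C([-T,T];L^2)} \lesssim_T \norm{f}_{L^2(X_+)}$.

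Next I would incorporate the potential. Write the equation as $\Box u = f - qu$ and solve by a fixed-point / Picard iteration: set $u^{(0)}=0$, $u^{(k+1)} = S_0(f - q u^{(k)})$. Since multiplication by $q$ is bounded on $L^2(U)$ with norm $\norm{q}_{L^\infty(U)}$, and since $S_0$ restricted to times $[-t^\circ,t^\circ]$ gains a factor that vanishes as $t^\circ\to 0$ (more precisely, $\norm{S_0 g}_{C([-t^\circ,t^\circ];H^1_0)}\le C t^\circ \norm{g}_{L^2}$ by integrating the energy identity), the map $u\mapsto S_0(f-qu)$ is a contraction on $C([-t^\circ,t^\circ];H^1_0(U))$ once $C t^\circ \norm{q}_{L^\infty(U)} < 1$. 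This gives a local-in-time solution on an interval whose length depends only on $\norm{q}_{L^\infty(U)}$; iterating over $\lceil T/t^\circ\rceil$ such steps covers $[-T,T]$ and produces the linear solution operator $S_q$, with uniqueness following at each step from the contraction (or directly from Grönwall applied to the energy of a difference of two solutions). Tracking constants through this iteration yields $\norm{S_q}_{L^2(X_+)\to H^1(X)}\le b(\norm{q}_{L^\infty(U)})$ for an explicit $b$ of the form $b(M) = C_T e^{C_T M}$ or similar, which is continuous in $M$; one also checks $u=S_q(f)\in H^1(X)$ since $X=(-T,T)\times U$ is bounded and $u$ together with its first-order derivatives lie in $L^2(X)$ by the above regularity. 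Finally, linearity of $S_q$ is automatic from uniqueness, and continuity into $C([-T,T];H^1_0(U))\cap C^1([-T,T];L^2(U))$ is recorded directly from the energy bounds.

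The main obstacle, such as it is, is bookkeeping rather than conceptual: one must be careful that the energy estimate for the inhomogeneous Dirichlet problem on $U$ (a ball with a ball removed, hence a Lipschitz domain with analytic boundary pieces) is the clean one with the small-time gain $C t^\circ$, and that the constant $C$ is uniform over $[-T,T]$; this is where I would cite a standard reference (e.g.\ \cite[Ch.~2]{MR1889089} or a PDE text) rather than reprove it. The dependence $b(\norm{q}_{L^\infty})$ then just propagates multiplicatively through the finitely many contraction steps, and continuity of $b$ is clear from its closed form.
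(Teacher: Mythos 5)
Your argument is correct, but it takes a genuinely different route from the paper. The paper does not construct the solution by hand: it quotes \cite[Thm.~2.30]{MR1889089} for existence, uniqueness and continuity of $S_q$ (after translating $u\vert_{t<0}=0$ into vanishing Cauchy data at $t=0$ and extending by zero), then obtains the quantitative bound \cref{eq:Sqbound} first only for $q\in C^1(\bar U)$ by inspecting the energy estimates in \cite[\S~7.2]{MR2597943}, and finally passes to $q\in L^\infty(U)$ by approximating with a sequence $q_k\in C^1(\bar U)$, $q_k\to q$ in $L^\infty$, and checking $S_{q_k}\to S_q$ in operator norm via the weak formulation. You instead build $S_q$ perturbatively off the free operator $S_0$ by a Duhamel/Picard fixed point, $u\mapsto S_0(f-qu)$, contracting on time intervals of length depending only on $\norm{q}_{L^\infty(U)}$ and then concatenating. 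This is more self-contained: it treats $L^\infty$ potentials directly, dispenses with the $C^1$-approximation step, and produces an explicit continuous $b$ (of exponential type) together with uniqueness (any solution in the stated class solves the Duhamel identity by uniqueness for the free problem, hence is the unique fixed point); the paper's route is shorter given the references and leans entirely on established theorems. Two small points of bookkeeping in your write-up: the small-time gain of $S_0$ is $C\sqrt{t^\circ}$ if you measure the source in space-time $L^2$ (it is $Ct^\circ$ only against $C([0,t^\circ];L^2)$), which is harmless for the contraction; and after the first subinterval you need the solution operator with nonzero data $(u,\partial_t u)(t^\circ)\in H^1_0(U)\times L^2(U)$, which is covered by the same standard energy theory you invoke for $S_0$.
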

\begin{proof}%
	Translating the condition $u\vert_{t<0}=0$ in \cref{eq:forward} into vanishing Cauchy data at $t=0$, the uniqueness and existence of the solution $u=u_q^f$ as well as the continuity of $S_q$ are guaranteed by \cite[Thm.~2.30]{MR1889089} (via extension by $0$ to $t<0$ of the solution constructed there). %
	
	We turn to finding the explicit norm bound on $S_q$, assuming at first that $q\in C^1(\bar U)$. One can directly verify that the solution $u$ constructed above is also a weak solution according to the definition given in \cite[\S~7.2]{MR2597943}, and by inspection of the proof of \cite[\S~7.2,~Thm.~2]{MR2597943} (see also \cite[\S~7.2,~Thm.~5(i)]{MR2597943}), one finds the bound in \cref{eq:Sqbound} for $q\in C^1(\bar U)$.
	On the other hand, if $q\in L^\infty(U)$ and we let $(q_k)_{k\in\mathbb{N}} \subset C^1(\bar U)$ converge to $q$ in $L^\infty$, by direct verification of the definition of a weak solution (see \cite[Eq.~(2.66)]{MR1889089}), we will see that $S_{q_k} \to S_q$ as operators $L^2(X_+)\to H^1(X)$, completing the proof.
\end{proof}

\bibliographystyle{amsplain}
 
\end{document}